\newcommand*\colvec[1]{
        \global\colveccount#1
        \begin{pmatrix}
        \colvecnext
}
\def\colvecnext#1{
        #1
        \global\advance\colveccount-1
        \ifnum\colveccount>0
                \\
                \expandafter\colvecnext
        \else
                \end{pmatrix}
        \fi
}
    \newcolumntype{P}[1]{>{\centering\arraybackslash}p{#1}}
    \newcolumntype{M}[1]{>{\centering\arraybackslash}m{#1}}
\newtheorem{thm}{Theorem}[section]
\newtheorem{cor}[thm]{Corollary}
\newtheorem{lem}[thm]{Lemma}
\newtheorem*{thm*}{Theorem}
\theoremstyle{definition}
\newtheorem{defn}{Definition}[section]
\newtheorem{rem}{Remark}[section]
\newtheorem{example}{Example}[section]
\newcommand{\thmref}[1]{Theorem~\ref{#1}}
\newcommand{\dlim}{\displaystyle \lim\limits}
\def\T1{T^{D,1}_{\{x_n\}}}
\def\S{\mathcal S}
\def\C{\mathcal C}
\def\Re{\mathcal R}
\def\R{\mathbb R}
\def\Z{\mathbb Z}
\def\A{\mathcal A}
\def\Td1{T^{D,1}_{\{x_n\}}}
\def\Ts1{T^{S,1}_{\{x_n\}}}
\def\TV{\text{TV}}
\def\lmt{\lim_{n\rightarrow \infty}}
\def\K{\mathcal{K}}
\def\Tid1{T^{D,1}_{\{x_n+\zeta_i\}}}
\def\Tis1{T^{S,1}_{\{x_n+\zeta_i\}}}
\def\Tjs1{T^{S,1}_{\{x_n+\zeta_j\}}}
\def\Tjd1{T^{D,1}_{\{x_n+\zeta_j\}}}
\newcommand{\labitem}[2]{%
\def\@itemlabel{\textbf{#1}}
\item
\def\@currentlabel{#1}\label{#2}}
\title{
Mixing times for two classes of stochastically modeled reaction networks}
\author{
David F. Anderson\thanks{Department of Mathematics, University of
  Wisconsin, Madison, USA.  anderson@math.wisc.edu.},
 \and
Jinsu Kim\thanks{Department of Mathematics, Pohang University of Science Technology, Pohang 37673, Republic of Korea. jinsukim@postech.ac.kr, grant support from the National Research Foundation of Korea (NRF). Corresponding author.  }
  }
\begin{document}

\maketitle
\begin{abstract}
The past few decades have seen robust research on questions regarding the existence, form, and properties of stationary distributions of stochastically modeled reaction networks.  When a stochastic model admits a stationary distribution an important practical question is: what is the rate of convergence of the distribution of the process to the stationary distribution? With the exception of \cite{XuHansenWiuf2022} pertaining to models whose state space is restricted to the non-negative integers, there has been a notable lack of results related to this rate of convergence in the reaction network literature.  This paper begins the process of filling that hole in our understanding. In this paper, we characterize this rate of convergence, via the mixing times of the processes, for two classes of stochastically modeled reaction networks.  Specifically, by applying a Foster-Lyapunov criteria we  establish exponential ergodicity for two  classes of reaction networks introduced in \cite{anderson2018some}.   Moreover, we show that for one of the classes the convergence is uniform over the initial state.
\end{abstract}



\section{Introduction}

Stochastic models of reaction networks, which will be formally introduced in Section \ref{sec:RN},  are used ubiquitously in the biosciences to track the (integer valued) counts of different interacting species.  This class of models is utilized at multiple different scales, from molecular processes to the level of populations.  
Numerous previous works have considered questions related to the stationary behavior of these models (see \cite{AM202,anderson2020stochastically, anderson2016product, AndProdForm, anderson2018some, Othmer2005, hoessly2020algebraic, XuHansenWiuf2022} for a subset).  In this paper, we study the natural followup question: if $\pi$ is the stationary distribution of the model, how fast does $P^t(x,\cdot)$ converge to $\pi(\cdot)$, where $P^t(x,A) = P(X(t) \in A| X(0) = x)$ is the time-dependent distribution of the process, denoted by $X$, given an initial condition of $X(0) = x$?

A natural route forward is to consider the \textit{mixing time} of the process.  Let $\varepsilon \in (0,\tfrac12)$.  For a Markov process $X$ with a stationary distribution $\pi$,  the \textit{mixing time} for the process with initial condition $x$ is
\begin{align}\label{eq:mixing time}
\tau_x^\varepsilon = \inf_{t \ge 0} \{\Vert P^t(x,\cdot) - \pi(\cdot) \Vert_{\TV} \le \varepsilon\}, 
\end{align}
where the total variation distance between two probability measures on a measurable space $(\Omega,\mathcal{F})$ is defined as $\Vert \mu - \nu\Vert_{\TV} = \sup_{A\in \mathcal{F}}|\mu(A) - \nu(A)|$.  In our case of a discrete state space, $\|\mu-\nu\|_{\TV} = \frac12 \sum_x |\mu(x) - \nu(x)|$, and so the total variation norm is an $L^1$ norm.  See \cite{YuvalLevinMixing} for a nice introduction to mixing times.

In Theorems \ref{thm:single_mixing} and \ref{thm:doublefull mixing}, we will provide results on the mixing times for two distinct classes of reaction networks, with each class characterized by different graph topological conditions on the associated reaction network. The network types considered in this paper were first considered in \cite{anderson2018some},  where positive recurrence (and hence existence of a stationary distribution) was established for each.  Thus, this paper can  naturally be viewed as a followup to \cite{anderson2018some}. For the sake of consistency, throughout this manuscript we used the same format and wordings of the key definitions and theorems as \cite{anderson2018some}.

For now, we simply write $\text{Cond}_1$ and $\text{Cond}_2$  for the sets of conditions considered in Theorems \ref{thm:single_mixing} and \ref{thm:doublefull mixing}, respectively (the specific conditions will be detailed in Section \ref{sec:mainresults} when we have the proper terminology available).    Each of our theorems is then of the following form:  for a stochastically modeled reaction network whose associated graph satisfies the conditions $\text{Cond}_i$ and whose stationary distribution is $\pi_i$, there is an $\eta_i >0$ and a function $B_i:\Z^d_{\ge 0} \to \R_{\ge0}$ for which
\begin{align}\label{eq:toprove}
\Vert P_i^t(x,\cdot) - \pi_i(\cdot) \Vert_{\TV} \le B_i(x) e^{-\eta_i t}, \quad \text{for all } t \ge 0.
\end{align}
Hence, $\tau_x^\varepsilon \le \frac1\eta\left[ \ln(B_i(x) + \ln(\varepsilon^{-1})\right] = O(\ln(B_i(x)).$  The exponential decay rate \eqref{eq:toprove} with $\eta_i>0$ means that the Markov process $X$ is \emph{exponentially ergodic}. Moreover, we characterize $B_i(x)$ in each case with $B_1(x) = C_1 (|x| +1)\ln(|x| + 2)$ and $B_2(x) = C_2$, for some $C_1,C_2>0$.  Hence, the convergence for the second class is shown to  be uniform over the initial condition. 

Our proofs rely upon the use of Foster-Lyapunov functions.  Specifically, for  Theorem \ref{thm:single_mixing} we will provide an appropriate function, $V:\Z^d_{\ge 0} \to \R_{\ge 0}$, with $V(x)\to \infty$ as $|x|\to \infty$, and constants $a>0$ and $b>0$ so that $\mathcal{A} V(x) \le -a V(x) + b$ for all $x$ outside some compact set, where $\mathcal{A}$ is the generator of the process.  For Theorem \ref{thm:doublefull mixing} we will provide an appropriate function $V$, and positive constants $a,b$, and $\delta$ so that $\mathcal{A} V(x) \le -a V(x)^{1+\delta} + b$ for $x$ outside a compact. The (known) results of Section \ref{sec:preliminary} related to Foster-Lyapunov functions are then utilized to complete the argument. The new results presented here should be compared with the results of \cite{anderson2018some} in which positive recurrence was proven by demonstrating the existence of an appropriate function $V$ with $\mathcal{A} V(x) \le -1$ for all $x$ outside some compact set.  The sharper estimates provided here are a result of a more careful analysis of the generator.

The remainder of the paper is outlined as follows.  In Section \ref{sec:RN}, we introduce stochastic models of reaction networks.  In Section \ref{sec:mainresults}, we precisely state our main results.  In Section \ref{sec:preliminary}, we provide some necessary results related to tiers (in the context of reaction networks) and Foster-Lyapunov functions.   In Section \ref{sec:proofs}, we provide proofs for our  main results using appropriate Foster-Lyapunov functions.  In Section \ref{sec:generalizations}, we provide some generalizations to our main results.

\section{Stochastic reaction networks}
\label{sec:RN}

 Stochastic reaction networks are a class of continuous-time Markov chains on $\Z^d_{\ge 0}$ that are used to model  biochemical systems and other population processes \cite{AndKurtz2011,AK2015,Gill76,Wilkinson2006}. Of particular interest to mathematical  researchers is how the  topological conditions of the underlying reaction graph relates to the qualitative behavior of the associated dynamical system (such as the mixing time).
  We proceed with the basic definitions from the field.

A reaction network is a graphical construct that describes a  set of possible interactions among some constituent ``species.'' 
\begin{defn}\label{def:21}
\emph{A  reaction network} is given by a triple of finite sets $(\S,\C,\Re)$ where:
\begin{enumerate}[(i)]
\item \emph{The species set} $\S=\{S_1,S_2,\cdots,S_d\}$, with $d<\infty$,  contains the species of the reaction network.
\item \emph{The reaction set} $\Re=\{R_1,R_2,\cdots,R_r\}$, with $r< \infty$, consists of ordered pairs $(y,y') \in \Re$ where 
\begin{align}\label{complex}
y=\sum_{i=1}^d y_iS_i \hspace{0.4cm} \textrm{and} \hspace{0.4cm}
y'=\sum_{i=1}^d y'_iS_i,
\end{align}
and where the values $y_i,y'_i \in \mathbb{Z}_{\ge 0}$ are the \emph{stoichiometric coefficients}. We will often write reactions $(y,y')$ as $y\rightarrow y'$.  

\item \emph{The complex set} $\C$ consists of the linear combinations of the species in (\ref{complex}). Specifically, 
$\C = \{y\ |\ y\rightarrow y' \in \Re\} \cup \{y' \ |\ y\rightarrow y' \in \Re\}$. For the reaction $y\to y'$, the complexes $y$ and $y'$ are termed the \emph{source} and \emph{product} complex of the reaction, respectively. \hfill $\square$
\end{enumerate}
\end{defn}

The state space of our eventual Markov model, which will be fully described after some discussion about reaction networks, will be described by
the copy numbers of the species.  The transitions of the model will be determined via the reactions.

We note that in particular examples, as in most of ours below, we do not enumerate the species as $S_1,\dots, S_d$ and instead utilized more suggestive notation (e.g., $E$ for  Enzyme, $P$ for a protein, etc.), or alphabetical labeling (e.g., $A$, $B$, $C$, etc.).  The clear enumeration of $\{S_1,S_2,\dots, S_d\}$ will be used primarily for theory  and proofs.  

The linear combinations in \eqref{complex} are, for example,  of the form $2S_1 + S_2$ or $2S_1 + S_2 + 4S_3$.  However, we will also associate each complex $y$ with  the vector whose $j$-th component is $y_j$,  i.e.~$y=(y_1,y_2,\cdots,y_d)^T \in \mathbb{Z}^d_{\ge 0}$. 
For example, when $\S=\{S_1,S_2,\dots,S_d\}$, the complex $2S_1+S_2$ is associated with the vector $(2,1,0,0,\dots,0)^T \in \Z^d_{\ge 0}$. 
We further note that it is reasonable to consider a complex $y$ with $y_i=0$ for each $i$. This complex is denoted by $\emptyset$.

We sometimes enumerate complexes and/or reactions. Hence, we will sometimes write, for example,   $y_1$ or $y_r$   to denote particular complexes, as opposed to the $1$st or $r$th element of the vector $y$.  Similarly, we may refer to the $r$th reaction as $y_r \to y_r' \in \Re$.  In such  cases, the $j$th coordinate of, for example,  $y_1$ and $y_r$ will be denoted $y_{1,j}$ and $y_{r,j}$, respectively.  Context will always make it clear if $y_r$ is referring to the $r$th element of a complex $y$ or if $y_r$ denotes a particular  complex (such as the source complex of the reaction $y_r\to y_r'$).

It is most common to present a reaction network with a directed graph, termed the \textit{reaction graph} of the network, in which the vertices are the complexes, each complex is written exactly one time (even if it appears in more than one reaction), and the directed edges are given by the reactions.
We present an example to solidify notation.

\begin{example}
The following reaction graph is associated with a reaction network modeling a usual substrate-enzyme kinetics

\[
	S+E \rightleftarrows SE \rightarrow E+P.
\]
 For this reaction network, $\S = \{S,E,SE,P\}$, $\C=\{S+E,SE,E+P\}$ and $\Re=\{S+E\rightarrow SE, SE\rightarrow S+E,SE\rightarrow E+P\}$.   \hfill $\triangle$
\end{example}

It is conditions on the reaction graph that will be important to us.  It is unfortunately the case that some of the terminology used in the study of reaction networks is different than that used more commonly.  Hence, we discuss here some of the differences.

A subgraph, which is connected, of the reaction graph is termed a \emph{linkage class \cite{clark1991first}.  If a linkage class is strongly connected} then we say that linkage class is \emph{weakly reversible}.  Finally, we say that the entire network/graph is \emph{weakly reversible} if each connected component (linkage class) is weakly reversible.

\begin{example}\label{ex11}
If we consider a reaction network described with the reaction graph,
\begin{align*}
&A \rightarrow 2B \rightleftarrows A+D, \qquad \emptyset \rightarrow B, \qquad B+C \rightarrow 2C,\\[-1ex]
&  \hspace{6.8cm}  \displaystyle \nwarrow \hspace{.2in}  \swarrow \\[-1ex]
& \hspace{2.9in} D
\end{align*}
then one can see that there are four species, eight complexes (vertices), seven reactions (directed edges), and three linkage classes (connected components).  The right-most connected component is strongly connected, and hence weakly reversible, whereas the other two are not.  Hence, the network as a whole is \emph{not} said to be weakly reversible.
\hfill $\triangle$
\end{example}


 The following notation  will be utilized.
 \begin{enumerate}
 \item For $u,v \in \mathbb{R}^d_{\ge 0}$, we define $u^v=\prod_{i=1}^d u_i^{v_i}$, with $0^0$ taken to be equal to 1. 
 \item For $x \in \mathbb{Z}^d_{\ge 0}$, we let $x! = \prod_{i=1}^d x_i!$.
 \end{enumerate}

We now show the most common way to stochastically model a reaction network.
Given a reaction network $(\S,\C,\Re)$, a \emph{(stochastic) kinetics} is an assignment of a function $\lambda_{y\to y'}:\Z_{\geq0}^d\to \R_{\geq0}$ to each reaction $y\to y'\in \Re$.  The time evolution of the copy numbers of species is then modeled by means of a continuous-time Markov chain with state space $\mathbb Z^d_{\ge 0}$,  whose transition rate from state $x$ to state $z$ is  given by
\begin{align*}
    q_{x,z}=\sum_{\substack{y\to y'\in\Re\\ y'-y=z-x}}\lambda_{y\to y'}(x),
\end{align*}
where the sum is over those reactions whose occurrence causes a net change that is precisely $z-x$.
If infinitely many transitions occur within a finite-time $T_\infty$ (in other words, an explosion occurs at $T_\infty$), we let 
  $X(t)=\Delta$ for any $t\geq T_\infty$, where $\Delta$ is a cemetery state not contained in $\Z_{\geq 0}^d$ \cite{NorrisMC97}.  The infinitesimal generator $\mathcal{A}$ of the associated Markov process acts on functions via the operation
\begin{align}\label{gen5}
 \mathcal{A}f(x)  =& \sum_{y \rightarrow y' \in \Re} \lambda_{y\rightarrow y'}(x)(f(x+y'-y)-f(x)),
\end{align}
for $f:\Z^d_{\ge 0} \to \R$   \cite{ethier2009markov}.

One of the most widely used choices of stochastic kinetics,
 and the one which the present paper focuses on exclusively, is given by  \emph{(stochastic) mass-action kinetics}, where for any reaction $y\to y'\in\Re$
\begin{align}\label{mass}
\lambda_{y\to y'}(x)=\kappa_{y\to y'} \frac{x!}{(x-y)!} \mathbbm{1}_{\{x \ge y\}} = \kappa_{y\to y'} \prod_{i=1}^d \frac{x_i!}{(x_i-y_i)!} \mathbbm{1}_{\{x_i \ge y_i\}},
\end{align}
for some positive constant $\kappa_{y\to y'}$, called a \emph{reaction rate constant} (or \emph{rate constant} for short).   We  denote $\K=\{\kappa_{y\to y'}\}$.  We may then denote the stochastic mass-action system by $(\S,\C,\Re,\K)$. 
To include the rate constants into the reaction graph, we place them right next to the arrow of the corresponding reaction as in
 $y\xrightarrow{\kappa_{y\to y'}} y'$.  Typical stochastic simulation algorithms for generating sample trajectories of this model are the Gillespie algorithm  \cite{Gill76,Gill77} or the next reaction method \cite{Anderson2007a,Gibson2000}, or are approximated via tau-leaping \cite{Anderson2007b,Gill2001}. Expectations and probabilities can be simulated by more advanced techniques \cite{AndHigham2012}.

\section{Main results: Exponential ergodicity of two classes of  reaction networks}\label{sec:mainresults}

In \cite{anderson2018some}, Anderson and Kim studied two classes of reaction networks and proved that any stochastic mass-action system associated with a network from one of these classes is  necessarily positive recurrent.  Moreover, they showed that positive recurrence holds regardless of the choice of reaction rate constants.  Hence, all such models  admit a stationary distribution. 
The different sets of conditions characterizing the classes are properties of the reaction graph  alone, and are easily checked.   In this paper, we prove that these models are also exponentially ergodic. Specifically, Theorem \ref{thm:single_mixing}  below states that networks satisfying the conditions detailed in Theorem 1 of \cite{anderson2018some} are exponentially ergodic.  Theorem \ref{thm:doublefull mixing}  below states that networks satisfying the conditions detailed in Theorem 2 of \cite{anderson2018some}  are exponentially ergodic and, moreover, that the convergence is uniform over the choice of initial condition.    After providing the results, with examples, we will close this section with two remarks that we hope shed light on the theorems.  We also provide generalizations to our results in Section \ref{sec:generalizations}.


Before stating the theorems, a few  definitions related to possible network structures are needed. These terms are introduced in \cite{anderson2018some}, but they are also universally used in this field.
\begin{defn}
A reaction network $(\S,\C,\Re)$ is called \emph{binary}  if $\|y\|_{\ell_1} = \sum_{i=1}^d y_i \le 2$ for all $y \in \C$. \hfill $\square$
\end{defn}

\begin{defn}
Let $(\S,\C,\Re)$ be a reaction network with $\S=\{ S_1,S_2,\cdots,S_d\}$.  
The complex $\emptyset$ is termed the \emph{zero complex}.  \emph{Unary complexes} and \emph{binary complexes} are the complexes of the form  $S_i$ and $S_i+S_j$, respectively, for $i,j\in\{1,\dots,d\}$, where we could have $i=j$. Binary complexes of the form $2S_i$ are called \emph{double complexes}.  If $2S_i\in \C$ for each $i=1,2,\dots,d$, then  the reaction network $(\S, \C, \Re)$ is said to be  \emph{double-full}. \hfill $\square$
\end{defn}

\begin{defn}
We will say that $(\S,\C,\Re)$ is \emph{open} if both $\emptyset \to S\in \Re$ and $S\to \emptyset \in \Re$ for each $S \in \S$. \hfill $\square$
\end{defn}

That is, the system is open if there are inflows and outflows for each species.  Note that in the case of open systems, all states communicate with each other and so $\Z^d_{\ge 0}$ is irreducible.

In the two papers \cite{AndGAC_ONE2011,AndBounded_ONE2011}, Anderson showed that \emph{deterministically} modeled weakly reversible reaction networks with a single linkage class are necessarily persistent and have bounded trajectories.  That is, they are ``well-behaved.''  Our first result pertains to a stochastic analog of those models.  That is, we start with a single linkage class that is weakly reversible and binary, and then add all inflow and outflow reactions (making the network ``open'').  Theorem 1 in \cite{anderson2018some} states that a stochastically modeled system associated with such a network is positive recurrent regardless of the choice of rate constants.  The result below gives us exponential ergodicity.

\begin{thm}\label{thm:single_mixing}
Let $(\S,\C, \Re)$ be a weakly reversible, binary reaction network that has a single linkage class and $\S=\{S_1,\dots,S_d\}$.   Let $\widetilde{\C} = \C \cup\{\emptyset\}\cup\{S_i \ | \ S_i \in \S\}$ and $\widetilde \Re =  \Re \cup_{S_i\in \S} \{\emptyset \to S_i, S_i \to \emptyset\}$. Then, for any choice of reaction rate constants $\K=\{\kappa_{y\to y'}\}$ there exist  $C,\eta>0$ so that the Markov process $X$ with intensity functions \eqref{mass} associated to the reaction network $(\S,\widetilde{\C},\widetilde \Re)$ satisfies
\begin{align}\label{eq:90878097}
\Vert P^t(x,\cdot) - \pi(\cdot) \Vert_{\TV} \le C(|x|+1) \ln(|x|+2) e^{-\eta t}, \quad \text{for all } t \ge 0 \text{ and all } x \in \Z^d_{\ge 0},
\end{align}
where $\pi$ is the unique stationary distribution of the model on $\Z^d_{\ge 0}$.
 Thus, the  mixing time  satisfies $\tau_x^\varepsilon = O(\ln(|x|))$, as $|x|\to \infty$.  
 
 Moreover, if there is a $w\in \R^{d}_{>0}$ for which $w\cdot (y'-y)=0$ for all $y\to y'\in \Re$, that is if the network $(\S,\C,\Re)$ is conservative, then 
 there exist  $C,\eta>0$ so that the Markov process $X$ with intensity functions \eqref{mass} associated to the reaction network $(\S,\widetilde{\C},\widetilde \Re)$ satisfies
\begin{align}\label{eq:90878098}
\Vert P^t(x,\cdot) - \pi(\cdot) \Vert_{\TV} \le C(|x|+1) e^{-\eta t}, \quad \text{for all } t \ge 0 \text{ and all } x \in \Z^d_{\ge 0}.
\end{align}
The mixing time of the model still satisfies $\tau_x^{\varepsilon} = O(\ln(|x|))$, as $|x|\to \infty$.
\end{thm}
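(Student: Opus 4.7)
The plan is to apply the Foster-Lyapunov framework previewed in the introduction: I will produce $V\colon\Z^d_{\ge 0}\to\R_{\ge 0}$ with $V(x)\to\infty$ as $|x|\to\infty$ together with constants $a,b>0$ such that $\mathcal{A}V(x)\le -aV(x)+b$ outside a compact set, and then invoke the standard Meyn-Tweedie type consequence (recorded in Section \ref{sec:preliminary}) to obtain
\begin{align*}
\|P^t(x,\cdot)-\pi(\cdot)\|_{\TV}\le C\, V(x)\, e^{-\eta t}
\end{align*}
for appropriate $C,\eta>0$. The two inequalities \eqref{eq:90878097} and \eqref{eq:90878098} will then follow simply by choosing $V$ whose growth matches the corresponding right-hand side.

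For the general (non-conservative) bound I would take the entropy-like function $V(x)=\sum_{i=1}^d(x_i+1)\ln(x_i+1)$, which is comparable to $(|x|+1)\ln(|x|+2)$. Splitting $\mathcal{A}V$ according to the three reaction families in $\widetilde{\Re}$: a short computation using the concavity of $n\mapsto\ln(n+1)$ shows that each outflow reaction $S_i\to\emptyset$ contributes approximately $-\kappa_{S_i\to\emptyset}\,x_i\ln(x_i+1)$, so the outflows collectively supply a drift comparable to $-aV(x)$ for a suitable $a>0$. Each inflow reaction $\emptyset\to S_i$ contributes only $O(\ln(|x|+2))$, which is easily absorbed into $b$ plus a small fraction of the outflow drift. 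The delicate part is the contribution of the original reactions $y\to y'\in\Re$; here I would lean on the tier analysis from \cite{anderson2018some}, which, for a weakly reversible binary network with a single linkage class, shows that along any sequence of states tending to infinity the intensities of the dominant source complexes balance and eventually control those of the product complexes, so that $\sum_{y\to y'\in\Re}\lambda_{y\to y'}(x)\bigl(V(x+y'-y)-V(x)\bigr)$ is of strictly lower order than $V(x)$ and can be absorbed into the outflow drift plus a constant.

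For the conservative bound the existence of $w\in\R^d_{>0}$ with $w\cdot(y'-y)=0$ for every $y\to y'\in\Re$ permits the far simpler linear choice $V(x)=1+w\cdot x$. By conservation the original reactions contribute nothing to $\mathcal{A}V$; the inflow reactions contribute the constant $\sum_i\kappa_{\emptyset\to S_i}w_i$; and the outflow reactions contribute $-\sum_i\kappa_{S_i\to\emptyset}w_i x_i$, which is bounded above by $-aV(x)+b$ for appropriate $a,b>0$ since $w_i>0$ for every $i$. Applying the Foster-Lyapunov consequence then yields \eqref{eq:90878098}, with no combinatorial machinery from \cite{anderson2018some} required.

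I expect the technical heart of the argument to be the non-conservative case, namely the upgrade from the inequality $\mathcal{A}V\le -1$ used in \cite{anderson2018some} for positive recurrence to the geometric drift $\mathcal{A}V\le -aV+b$. This demands a quantitative refinement of the tier analysis that tracks not only the sign but also the precise order of magnitude of each reaction's contribution relative to $V(x)\sim |x|\ln|x|$, uniformly in the rate constants and across all tier structures compatible with weak reversibility, single linkage class, and binarity. Ensuring that the residual from the original reactions is genuinely $o(V(x))$ rather than merely nonpositive is, I expect, where the main work lies.
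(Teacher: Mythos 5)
Your conservative case is precisely the paper's argument: the paper takes $W(x)=w\cdot x$, notes that the reactions of $\Re$ contribute nothing to $\mathcal{A}W$, that the inflows contribute the constant $\sum_i\kappa_{\emptyset\to S_i}w_i$ and the outflows contribute $-\sum_i\kappa_{S_i\to\emptyset}w_ix_i\le -aW(x)$, and then invokes Theorem \ref{thm:exp_ergo}; adding the constant $1$ to your $V$ changes nothing. Your framework for the non-conservative case --- an entropy-type Lyapunov function comparable to the paper's $V$ in \eqref{eq:StandardLyapunov}, a geometric drift condition, and Theorem \ref{thm:exp_ergo} --- is also the paper's.

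The gap is in your treatment of the original reactions $\Re$. You assert that $\sum_{y\to y'\in\Re}\lambda_{y\to y'}(x)\bigl(V(x+y'-y)-V(x)\bigr)$ is of strictly lower order than $V(x)\sim|x|\ln|x|$ and can be absorbed into the outflow drift plus a constant. That is false as stated: a single ``ascending'' term can be positive and of order much larger than $V(x)$. For example, along $x_n=(n,\lceil n^{1/2}\rceil)$ a reaction $A+B\to 2A$ contributes roughly $+\tfrac{\kappa}{2}\,n^{3/2}\ln n\gg n\ln n$, which can be absorbed neither into the linear-order outflow drift nor into a constant. Such terms are controlled only by cancellation against one dominant negative term coming from a reaction $y_1\to y_1'$ with $y_1$ in the top tier, $y_1\succ_D y_1'$, and non-vanishing normalized intensity $\lambda_{y_1\to y_1'}(x_n)/(x_n\vee1)^{y_1}$. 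The existence of such a reaction along every D-type tier sequence is exactly Lemma \ref{lemma:tier_topology}, and this is where weak reversibility, the single linkage class, and (crucially) the added outflows enter: near the boundary the top-tier reactions of $\Re$ may all have zero intensity, and the dominating reaction must be an outflow $S_i\to\emptyset$. Once that reaction exists, \eqref{asymptotic of AV 2} of Lemma \ref{lemma:tier_lya} bounds the \emph{entire} generator by the sum of the descending terms alone, and the paper (arguing by contradiction along tier sequences) then retains only the term $-C\lambda_{S_1\to\emptyset}(x_n)\ln(x_{n,1}\vee 1)\le -C'V(x_n)$ for a maximal coordinate $x_{n,1}$. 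So the decomposition ``outflows supply $-aV$, the $\Re$-part is negligible'' does not survive; you correctly flag this step as where the main work lies, but as sketched the argument would not close.
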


We provide an example.

\begin{example}
\label{example:open-binary}
We take $\S = \{A,B,C\}$ and the binary reaction network with a single linkage class, $(\S,\C,\Re)$, to be
\begin{align*}
    &A \xrightarrow{\ \ \ \ \ \ } B.\\
&  \   \nwarrow \quad \swarrow   \\
& \hspace{0.60cm} 2C
\end{align*}
Adding inflows and outflows yields the reaction network $(\S,\widetilde C, \widetilde \Re)$, which has reaction graph
\begin{align}\label{eq:example_model1}
\begin{split}
    &\hspace{.33in}C \\
    &\hspace{.26in}\uparrow \downarrow\\
    &\hspace{.35in} \emptyset \\
    & \nearrow\hspace{-.07in}\swarrow \hspace{.2in} \nwarrow\hspace{-0.07in}\searrow\\
        &A \xrightarrow{\ \ \ \ \ \ \ \ \ } B\\
&  \   \nwarrow \hspace{.3in} \swarrow   \\
& \hspace{0.3in} 2C. 
\end{split}
\end{align}
Hence, by Theorem \ref{thm:single_mixing}, for any choice of rate constants the Markov model associated to the network $(\S,\widetilde \C, \widetilde \Re)$ is exponentially ergodic with mixing time bounded above by the logarithm of the initial counts. 

We provide the results of  a numerical example in Figure \ref{fig1}A.  Specifically, we consider the stochastic model associated with the network \eqref{eq:example_model1} in which all rate constants are selected to be equal to one.  In this case, the stationary distribution is a product of Poissons \cite{AndProdForm}. We then parameterized the model via the  initial condition $x_m = (m,m,m)^T$.  On the left hand side of Figure \ref{fig1}A we provide plots of $\|P^t(x,\cdot)- \pi(\cdot)\|_{\TV}$, as a function of time, for each of $m = 1, 10, 100$.  On the right hand side of Figure \ref{fig1}A we provide a plot of $\tau^\varepsilon_{x_m}$, for $x_m = (m,m,m)^T$ and $\varepsilon = 0.1$, as a function of $m$.  Note that this plot qualitatively agrees with Theorem \ref{thm:single_mixing} in that it appears logarithmic in $m$. The plots were made via approximating $P^t(x,\cdot)$ via Monte Carlo methods, and estimating the total variation distance via the approximation 
\[
    \Vert P^t(x,\cdot)-\pi(\cdot)\Vert_{\TV}=\frac{1}{2}\sum_{z}|P^t(x,z)-\pi(z)| \approx \frac{1}{2}\sum_{z \in [0,200]^3}|P^t(x,z)-\pi(z)|.
    \]
    \hfill $\triangle$
\end{example}

\begin{figure}[!hb]
    \centering
    \includegraphics[width=0.7\textwidth]{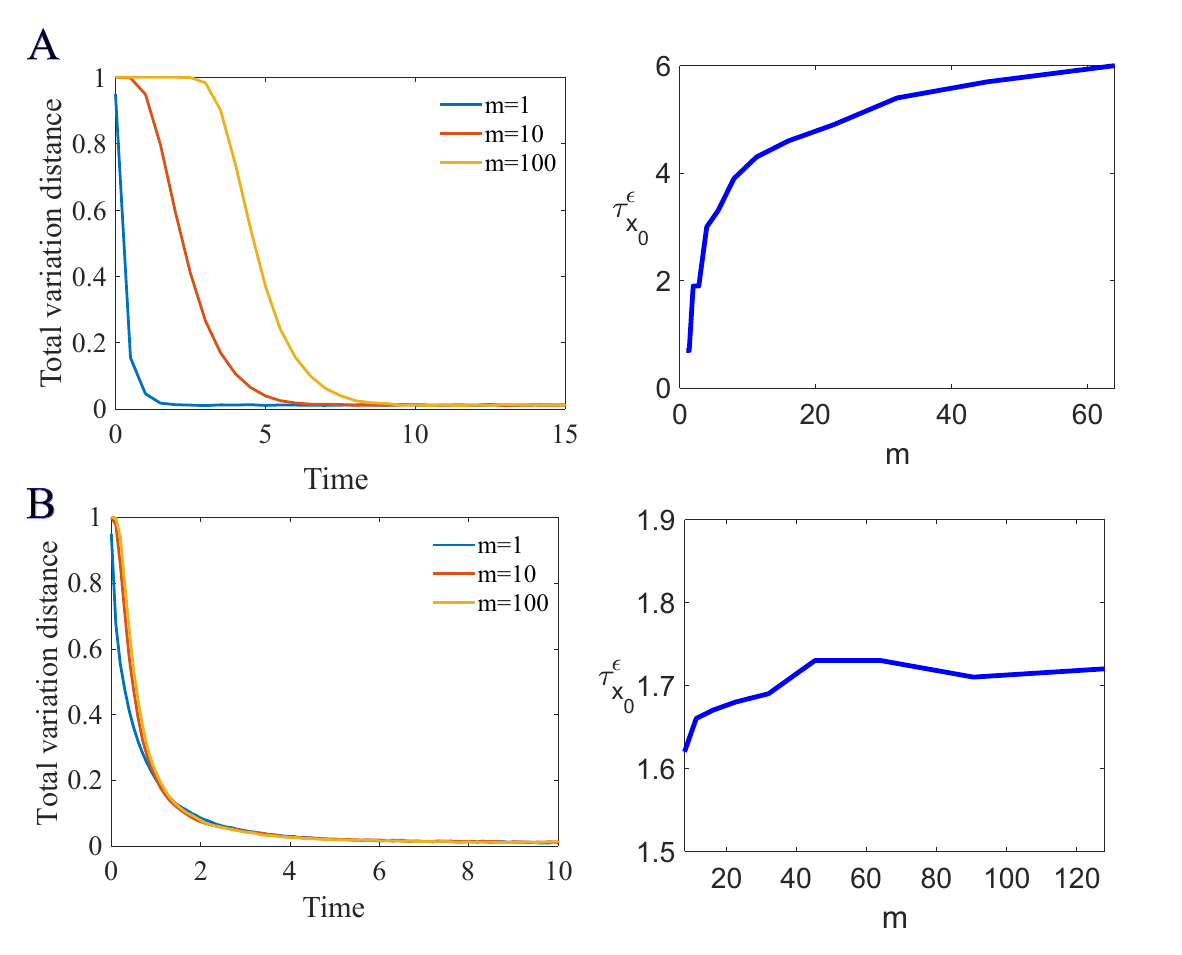}
    \caption{A and B (Left). For each initial condition $X(0)=(m,m,m)^\top$, the images on the left provide  plots of  $\Vert P^t(x,\cdot)-\pi(\cdot)\Vert_{\TV}$ over time for the reaction networks \eqref{eq:example_model1} (top) and \eqref{eq:ex of double} (bottom), where $\pi$ represents their stationary distributions. \\
    A and B (Right). The images on the right provide plots  of the mixing times $\tau^\varepsilon_{x_m}$ with $\varepsilon=0.1$ and $x_m=(m,m,m)^\top$, as functions of $m$.  The top image is for the reaction network \eqref{eq:example_model1} whereas the bottom image is for the the reaction network \eqref{eq:ex of double}.   Note that the top image   appears to show logarithmic growth whereas the bottom appears to show an asymptotic upper bound.  Both observations agree with our theory.
    }
    \label{fig1}
\end{figure}



The next theorem, which allows us to conclude that the convergence is also uniform over initial states,  drops both the weak reversibility and the single linkage class assumption.  However, it adds the assumption that the network is double-full and an assumption related to  paths through the reaction graph that start with double complexes. Another difference with the previous theorem, though not the corollay, is that now $\Z^d_{\ge 0}$ may no longer be irreducible. This is also an extension of Theorem 2 in \cite{anderson2018some} that only shows ergodicity of the proposed reaction network class.

\begin{thm} \label{thm:doublefull mixing}
Let $(\S,\C, \Re)$ be a binary reaction network satisfying the following two conditions:
\begin{enumerate}[(i)]
\item the reaction network is double-full, and
\item for each double complex (of the form $2S_i$) there is a directed path within the reaction graph beginning with the double complex itself and ending with either a unary complex (of the form $S_j$) or  the zero complex, $\emptyset$.
\end{enumerate} 
Let  $\K=\{\kappa_{y\to y'}\}$ be a choice of reaction rate constants.
Let $X(0) = x \in \mathbb{S} \subset \Z^d_{\ge 0}$ be in a closed communication class, $\mathbb{S}$,  and let $\pi$ be the stationary distribution of the model on that class.  Then,   there exist  $C,\eta>0$ so that the Markov process $X$ with intensity functions \eqref{mass} associated to the reaction network $(\S,\C, \Re)$ satisfies
\begin{align*}
\Vert P^t(x,\cdot) - \pi(\cdot) \Vert_{\TV} \le C e^{-\eta t}, \quad \text{for all } t \ge 0,
\end{align*}
where neither $C$ nor $\eta$ depend upon $x \in \mathbb{S}$.
 Thus, the mixing time   satisfies $\tau_x^\varepsilon = O(1)$, as $|x|\to \infty$.
\end{thm}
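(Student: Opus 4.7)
The plan is to establish the theorem by exhibiting a Foster-Lyapunov function $V:\Z^d_{\ge 0}\to \R_{\ge 0}$ that satisfies the super-linear drift condition
\begin{align*}
\mathcal{A}V(x) \le -aV(x)^{1+\delta} + b \qquad \text{for all } x \text{ outside some compact set,}
\end{align*}
with constants $a,b,\delta>0$. As recalled in the introduction and in Section \ref{sec:preliminary}, such a drift forces the sample path to enter a sublevel set of $V$ in a time that is uniformly bounded in the initial state, so one obtains the uniform exponential bound $\Vert P^t(x,\cdot)-\pi(\cdot)\Vert_{\TV} \le Ce^{-\eta t}$ with constants independent of $x \in \mathbb S$. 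Hence the heart of the argument is producing $V$ and performing the drift computation.

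I would take $V$ in the entropy-like family used in \cite{anderson2018some}, for concreteness $V(x) = \sum_{i=1}^d [(x_i+1)\ln(x_i+1) - x_i]$, which grows as $|x|\ln|x|$ and whose discrete differences satisfy $V(x+\zeta)-V(x) = \sum_i \zeta_i \ln(x_i+1) + O(1)$ for the bounded stoichiometric changes $\zeta = y'-y$ arising in a binary network. With this $V$ the generator becomes
\begin{align*}
\mathcal{A}V(x) = \sum_{y\to y'\in\Re} \lambda_{y\to y'}(x)\,[V(x+y'-y)-V(x)].
\end{align*}
I would then invoke the tier decomposition from \cite{anderson2018some} to bound $\mathcal{A}V$ tier by tier. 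Condition (i) of the theorem guarantees that for every $i$ the complex $2S_i$ is a source, so the reaction $2S_i\to y'$ contributes with mass-action intensity $\kappa x_i(x_i-1) \sim x_i^2$. Condition (ii) guarantees that, for the coordinate $i^*$ of dominant order within a given tier, one can identify a reaction on the guaranteed directed path out of $2S_{i^*}$ whose firing produces a change in $V$ of the form $-c\ln(x_{i^*}) + O(1)$. Multiplying by the quadratic intensity yields a leading negative contribution of order $-|x|^2\ln|x|$, while the $O(|x|^2)$ positive contributions from all other reactions are dominated. Since $V(x) \asymp |x|\ln|x|$, one has $V(x)^{1+\delta}\lesssim |x|^2\ln|x|$ for any small $\delta>0$ when $|x|$ is large, and the required drift inequality follows.

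The principal obstacle is the tier-level bookkeeping needed to show that the $-|x|^2\ln|x|$ term actually survives the sum over all reactions. Condition (ii) supplies only a \emph{path} from $2S_i$ to a unary or zero complex, not a single direct reaction, so the downhill reaction that must be isolated in the analysis may live at a non-initial vertex of the path and involve species other than $i^*$; moreover, intermediate reactions on the path can transiently raise $V$. The tier argument of \cite{anderson2018some} was designed precisely to handle this by partitioning $\Z^d_{\ge 0}$ according to the asymptotic relative sizes of the coordinates and selecting, in each tier, an appropriate reaction along the guaranteed path whose firing decreases $V$. The new technical work here is to repeat that decomposition while tracking the \emph{quantitative} strength of the drift and showing that the dominant term is genuinely of order $-|x|^2\ln|x|$ rather than merely negative, thereby promoting the $\mathcal{A}V\le -1$ bound of \cite{anderson2018some} to the super-linear bound required for uniform exponential ergodicity.
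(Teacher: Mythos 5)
Your proposal follows essentially the same route as the paper: the entropy-type Lyapunov function, the super-Lyapunov drift criterion $\mathcal{A}V\le -aV^{1+\delta}+b$, and the D-type tier decomposition in which double-fullness forces some $2S_i$ into the top tier (hence a quadratic intensity) while the path condition supplies a tier-descending reaction whose negative contribution dominates. The only minor difference is quantitative: the paper does not need the full $-|x|^2\ln|x|$ strength you describe, since it merely bounds the logarithmic ratio below $-1$ for large $n$ and plays the resulting $-Cx_{n,i}^2$ drift against $V(x_n)^{3/2}\lesssim |x_n|^{3/2}(\ln|x_n|)^{3/2}$ with $\delta=1/2$.
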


We provide an example.

\begin{example}\label{example:network_type2}
Consider the following network
\begin{align}\label{eq:ex of double}
\begin{split}
& 2A \rightleftarrows   A+B \rightleftarrows B, \quad  A\rightleftarrows  2C \rightleftarrows B+C.  \\
&2B \rightleftarrows \emptyset,  \quad C \rightleftarrows A+C
\end{split}
\end{align}
This network is double full since $2A, 2B, 2C \in \C$.  Moreover, for each double complex, there is a sequence of directed edges starting with the double complex and arriving at a complex that is either $\emptyset$ or a unary complex.  Specifically, we have the following such sequences:
\begin{align*}
    &2A \to A + B \to B\\
    &2C \to A\\
    &2B \to \emptyset
\end{align*}
Hence, by Theorem \ref{thm:doublefull mixing}, for any choice of rate constants the associated Markov model is exponentially ergodic.  Moreover, the convergence time is uniform over the initial state. 

As in Example \ref{example:open-binary}, we 
 provide the results of  a numerical example in Figure \ref{fig1}B.  Specifically, we consider the stochastic model associated with the network \eqref{eq:ex of double} in which all rate constants are selected to be equal to one.  In this case, the stationary distribution is a product of Poissons \cite{AndProdForm}. We then parameterized the model via the  initial condition $x_m = (m,m,m)^T$.  On the left hand side of Figure \ref{fig1}B we provide plots of $\|P^t(x,\cdot)- \pi(\cdot)\|_{\TV}$, as a function of time, for each of $m = 1, 10, 100$.  On the right hand side of Figure \ref{fig1}B we provide a plot of $\tau^\varepsilon_{x_m}$, for $x_m = (m,m,m)^T$ and $\varepsilon = 0.1$, as a function of $m$.  Note that this plot qualitatively agrees with Theorem \ref{thm:doublefull mixing} in that it appears to be bounded above in $m$. The plots were made via approximating $P^t(x,\cdot)$ via Monte Carlo methods, and estimating the total variation distance via the approximation 
\[
    \Vert P^t(x,\cdot)-\pi(\cdot)\Vert_{\TV}=\frac{1}{2}\sum_{z}|P^t(x,z)-\pi(z)| \approx \frac{1}{2}\sum_{z \in [0,200]^3}|P^t(x,z)-\pi(z)|.
    \]
\hfill $\triangle$
\end{example}

We close this section with two remarks.

\begin{rem}
We note that the exponential convergence bounds in Theorem \ref{thm:single_mixing} (given in \eqref{eq:90878097} and \eqref{eq:90878098}) are not uniform over the initial state, $x$.  This can be understood by the fact that the ``strength'' of the convergence is governed by the linear outflow reactions $S_i \to \emptyset$.  Such linear terms lead to exponential decay and, hence, logarithmic mixing times.  Conversely, for Theorem \ref{thm:doublefull mixing} the decay has quadratic rate, which should be compared to the ODE $\dot x(t) = -x(t)^2$, which implodes from infinity (i.e., the time needed to hit a given compact from a particular state   is bounded from above).
\end{rem}

\begin{rem}
To prove Theorems \ref{thm:single_mixing} and \ref{thm:doublefull mixing} we require a number of  results that we collect in the next section.  However, we will provide a bit of intuition here.  The main point is that we must show that the ``average drift'' of the model (which will be clarified in the next section through the use of Foster-Lyapunov functions) points inward toward the origin from \textit{each} $x \in \Z^d_{\ge 0}$ outside of a compact set.  There are then two main cases to consider: (i) those $x$ that are ``away'' from the boundary in that there are sufficient counts for each reaction to have positive intensity, and (ii) those $x$ near the boundary in that at least one of the reactions has zero intensity (for example, perhaps $x_i = 0$ for some $i$).  The case of $x$ ``away'' from the boundary can be handled in a manner that is similar to the deterministic case \cite{AndBounded_ONE2011, AndGAC_ONE2011}, and relies on the overall structure of the networks.  The existence of the outflow reactions in Theorem \ref{thm:single_mixing} and of the double complex condition in Theorem \ref{thm:doublefull mixing} ensure that the  drift towards the origin also takes place for the $x$ near the boundary. 
\end{rem}


\section{Preliminary concepts: tiers and Foster-Lyapunov functions}
\label{sec:preliminary}

This brief section is dedicated to introducing two key analytical tools we will use in the proofs of Theorems \ref{thm:single_mixing} and \ref{thm:doublefull mixing}: tiers and Foster-Lyapunov functions.   

\subsection{Tiers}\label{subsec:tiers}
``Tiers'' were introduced to allow for the partition of the complexes along sequences in a manner that tells us which reactions/complexes are ``dominating'' the dynamics of the mathematical model in different regions of the state space.  Tiers were first introduced by Anderson in \cite{AndGAC_ONE2011, AndBounded_ONE2011} in the context of deterministically modeled reaction networks, but have recently been successfully used to analyze stochastic models as well \cite{anderson2018some,anderson2020tier}.  


There are multiple ways to partition the set of complexes along a particular sequence of points.  Below, and for reasons that will become clear throughout our analysis, we choose to do so via the function $x\mapsto (x \vee 1)$, which is the vector whose $j$th component is
\[
(x\vee 1)_j = x_j\vee 1 = \max\{x_j,1\}.
\]
This particular choice of function (and hence partition) was first used in \cite{anderson2018some} where it was called a ``D-type partition.''

\begin{defn}\label{def:tier-seq}
Let $(\S,\C,\Re)$ be a reaction network with  $\S=\{S_1,\dots,S_d\}$.   A sequence $\{x_n\}$ of points in $\R^d_{\ge 0}$ is called a \emph{D-type tier-sequence}  if
\begin{enumerate}[(i)]
\item  for each $i\in \{1,\dots,d\}$ the limit $\displaystyle \lim_{n\rightarrow\infty}x_{n,i}$ exists (it could be infinity)  and  $\displaystyle \lim_{n\rightarrow \infty}x_{n,i} = \infty$ for at least one $i$, and
\item for any pair of complexes $y,y'\in \C$, the  limit 
\begin{align*}
   \lim_{n\rightarrow \infty} \frac{(x_n \vee 1)^{y}}{(x_n\vee 1)^{y'}},
\end{align*}  
exists (it could be infinity).  \hfill $\square$
\end{enumerate}
\end{defn}
  
\begin{rem}\label{rem:tier seq}
Note that, given a sequence $\{x_n\}$ of vectors in $\R^d_{\ge0}$ with $\limsup_{n\to \infty} \|x_n\|_\infty = \infty$, it is always possible to find a subsequence that is a D-type tier-sequence.  This is simply because we assume that only finitely many complexes are present.  See Lemma 4.2 of \cite{AndGAC_ONE2011}.
\end{rem}

\begin{defn}\label{def31}
Let $( \S,\C,\Re)$ be a  reaction network and let $\{x_n\}$ be a D-type tier-sequence in $\R^d_{\ge 0}$. We define a partition of the complexes $\C =  \cup_{i=1}^K T^{D,i}_{\{x_n\}}$  along the tier-sequence $\{x_n\}$ in the following recursive manner:
\begin{enumerate}[(i)]
\item  we say that a complex $y$ is in \emph{tier 1} (and write $y \in T^{D,1}_{\{x_n\}}$) if for all complexes $y'\in \C$
\begin{align*}
\lim_{n\rightarrow \infty} \frac{(x_n \vee 1)^{y}}{(x_n\vee 1)^{y'}} >0;
\end{align*}
\item we say that a complex $y$ is in \textit{tier} $i$ (and write $y \in T^{D,i}_{\{x_n\}}$) if there exists $y'\in T^{D,i-1}_{\{x_n\}}$ with
\[
    \lim_{n\to \infty}  \frac{(x_n \vee 1)^{y}}{(x_n\vee 1)^{y'}} =0
\]
and for all complexes $y'\notin \bigcup_{j=1}^{i-1} T^{D,j}_{\{x_n\}}$ we have
\[
    \lim_{n\to \infty}  \frac{(x_n \vee 1)^{y}}{(x_n\vee 1)^{y'}} >0.
\]
\end{enumerate}
The mutually disjoint subsets $T^{D,i}_{\{x_n\}}$ are called \textit{D-type tiers along $\{x_n\}$}.  If $y\in T^{D,i}_{\{x_n\}}$ and $y' \in T^{D,j}_{\{x_n\}}$ with $i<j$ we will write $y \succ_{D} y'$.  If $y,y'$ are in the same D-type tier, then we will write $y \sim_{D} y'$.
\hfill $\square$
\end{defn} 

Thus, those complexes in tier 1 maximize $(x_n\vee 1)^y$ along the sequence, with those in tier 2 being second largest, etc.  Note that as a consequence of the above definition, 
if  $y,y' \in T^i_{\{x_n\}}$ then there exists $C\in (0,\infty)$ such that
\begin{align*}
\lim_{n\rightarrow \infty} \frac{(x_n \vee 1)^{y}}{(x_n\vee 1)^{y'}} =C, 
\end{align*}
explaining the notation $y \sim_D y'$, and  if $y\in T^i_{\{x_n\}}$ and $y'\in T^{j}_{\{x_n\}}$ for some $i<j$, then 
\begin{align*}
\lim_{n\rightarrow \infty} \dfrac{(x_n \vee 1)^{y'}}{(x_n\vee 1)^{y}} = 0,
\end{align*}
explaining the notation $y\succ_D y'$.

The following example demonstrates the concept.

\begin{example}\label{ex:tier}
Consider the reaction graph below.
\begin{align*}
    &2A \ \ \longrightarrow \ \ B \rightleftharpoons \emptyset \rightleftharpoons A. \\[-0.5ex]
    &  \ \displaystyle \nwarrow \hspace{0.8cm}  \swarrow \\[-0.5ex]
& \hspace{0.4cm} A+B
\end{align*}
Let $\C$ be enumerated with $y_1=2A, y_2=B, y_3=\emptyset, y_4=A, y_5=A+B$.
For $x_n=(n,0)^\top$, $(x_n\vee 1)^{y_i}$ is equal to $n^2, 1, 1, n$ and $n$ for $i=1,2,3,4$ and $5$, respectively. 
On the other hand, if $x_n=(n,n+1)^\top$, then $(x_n\vee 1)^{y_i}$ is equal to $n^2, n+1, 1, n$ and $n(n+1)$ for $i=1,2,3,4$ and $5$, respectively.
Note that both sequences are  tier-sequences.
The tier structures (i.e., the partition) for each tier-sequence is  shown in the following table.

\begin{center}
\begin{tabular}{ M{2cm}| M{2cm}| P{2cm}}
     & & \\[-2ex]
     $x_n$& $(n,0)^\top $ & $(n,n+1)^\top$  \\[1ex]
     \hline 
     & & \\[-2ex]
     $T^{D,1}_{\{x_n\}}$ & $y_1$ & $y_1,y_5$\\[1ex]
     \hline
     & & \\[-2ex]
     $T^{D,2}_{\{x_n\}}$ & $y_4, y_5$ & $y_2, y_4$\\[1ex]
     \hline
     & &  \\[-2ex]
     $T^{D,3}_{\{x_n\}}$ & $y_2, y_3$ & $y_3$\\[1ex]
\end{tabular}
\end{center}

\end{example}

 We close this section with a  useful lemma.

 \begin{lem}\label{lem22}  Let $\{x_n\}$, with $x_n \in \Z^d_{\ge 0}$,   be a D-type tier-sequence of a reaction network $(\S,\C,\Re)$ and let $y \to y'\in \Re$. If $\lim_{n\to \infty} \lambda_{y\to y'}(x_n)>0$, where $\lambda_{y\to y'}$ is given in \eqref{mass}, then there is a $C>0$ with \begin{align*}
&\lim_{n \rightarrow \infty} \dfrac{\lambda_{y\to y'}(x_n)}{(x_n \vee 1)^{y}} = C.
\end{align*}
Moreover, if  the source complex $y$ satisfies $|y_i|\le 1$ for each $i \in \{1,\dots, d\}$, then $C = \kappa_{y\to y'}$.
\end{lem}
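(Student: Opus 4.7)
The plan is to reduce the claim to a coordinate-by-coordinate comparison of the mass-action intensity with the monomial $(x \vee 1)^y$, and then to observe that each coordinate falls into one of only three easy regimes. First I would note that since $\lim_{n\to\infty}\lambda_{y\to y'}(x_n)>0$, the indicator $\mathbbm{1}_{\{x_n\ge y\}}$ in \eqref{mass} is equal to $1$ for all sufficiently large $n$, so from that point on we may write
\begin{equation*}
\frac{\lambda_{y\to y'}(x_n)}{(x_n\vee 1)^{y}} \;=\; \kappa_{y\to y'} \prod_{i=1}^d \frac{x_{n,i}!/(x_{n,i}-y_i)!}{(x_{n,i}\vee 1)^{y_i}}.
\end{equation*}
It therefore suffices to show that each factor in the product converges to a strictly positive constant.

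Next I would analyze the $i$th factor via three cases. If $y_i=0$, the factor equals $1$ identically. If $y_i\ge 1$ and $\lim_n x_{n,i}=\infty$ (which exists by Definition \ref{def:tier-seq}(i)), then eventually $x_{n,i}\ge 1$, so $(x_{n,i}\vee 1)^{y_i}=x_{n,i}^{y_i}$, and
\begin{equation*}
\frac{x_{n,i}(x_{n,i}-1)\cdots(x_{n,i}-y_i+1)}{x_{n,i}^{y_i}}\;\longrightarrow\;1.
\end{equation*}
If instead $y_i\ge 1$ and $\lim_n x_{n,i}=\ell_i<\infty$, then since $\{x_{n,i}\}\subset\mathbb Z_{\ge 0}$ converges, it is eventually equal to the integer $\ell_i$; because the intensity is eventually positive we must have $\ell_i\ge y_i\ge 1$, so $(x_{n,i}\vee 1)^{y_i}=\ell_i^{y_i}$ and the factor is eventually the positive constant $\ell_i!/((\ell_i-y_i)!\,\ell_i^{y_i})$. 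Taking the product over $i$ gives a positive constant, and multiplying by $\kappa_{y\to y'}$ yields the desired $C>0$.

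For the second assertion, assume $y_i\le 1$ for every $i$. The only coordinates contributing a nontrivial factor are those with $y_i=1$, in which case $x_{n,i}!/(x_{n,i}-1)!=x_{n,i}$ and the ratio becomes $x_{n,i}/(x_{n,i}\vee 1)$; the positivity of the intensity again forces $x_{n,i}\ge 1$ eventually, so the ratio is eventually $1$. All factors are therefore $1$ in the limit, so $C=\kappa_{y\to y'}$.

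I do not anticipate a genuine obstacle here; the only subtlety is handling the ``bounded coordinate'' case, for which the crucial (and standard) observation is that an integer-valued sequence with a finite limit is eventually constant, together with the fact that $\lim \lambda_{y\to y'}(x_n)>0$ rules out the degenerate possibility $\ell_i<y_i$.
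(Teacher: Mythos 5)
Your proposal is correct and is essentially the paper's own argument, just written out in full: the paper likewise observes that positivity of the limiting intensity forces $x_n\ge y$ (hence $(x_n\vee 1)^y=x_n^y$) for large $n$ and then appeals to the polynomial structure of $\lambda_{y\to y'}$, which is exactly your coordinate-by-coordinate case analysis. The only detail you supply beyond the paper is the explicit treatment of bounded coordinates via the eventual constancy of convergent integer sequences, which is a correct and welcome elaboration rather than a deviation.
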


\begin{proof}
Since $\lim_{n\to \infty} \lambda_{y\to y'}(x_n)>0$, we have that $(x_n\vee 1)^y = x_n^y$ for all $n$ large enough.  The result then follows from the basic polynomial structure of $\lambda_{y\to y'}$.
\end{proof}

\subsection{Lyapunov functions for mixing times}\label{app:lya}

The following known result is key to our analysis.  See, for example, \cite{MT-LyaFosterIII, Tweedie1981}.

\begin{thm}\label{thm:exp_ergo}
Let $X$ be a continuous-time Markov chain on an irreducible,  countably infinite state space $\mathbb{S} \subset \Z^d$ with generator $\mathcal A$. Suppose there exists a positive function $V$ on $\mathbb{S}$ satisfying the following. 
\begin{enumerate}[(i)]
\item $V(x) \to \infty$, as $|x|\to \infty$, and
\item there are positive constants $a$ and $b$ such that
\begin{eqnarray}\label{eq:exp_ergo}
\mathcal{A}V(x) \le -aV(x)+b \quad \text{for all} \ \ x \in \mathbb{S}.
\end{eqnarray}
\end{enumerate}
Then   there exists positive constants $\eta$ and $C$ such that
\begin{align*}
\Vert P^t(x,\cdot) - \pi(\cdot) \Vert_{\TV} \le C(V(x)+1)e^{-\eta t}, \quad \forall x \in    \mathbb{S}.
\end{align*}
\end{thm}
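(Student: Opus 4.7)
The plan is to invoke the classical Meyn--Tweedie framework for $V$-geometric ergodicity of continuous-time Markov chains, as developed in \cite{MT-LyaFosterIII, Tweedie1981}. The proof has three ingredients: (a) an exponential moment bound on $V(X(t))$ obtained from the drift inequality via Dynkin's formula, (b) the observation that sublevel sets of $V$ are small (or petite), and (c) a standard regenerative/coupling argument converting these into a $V$-weighted total variation bound.

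First I would apply Dynkin's formula to the function $(t,x)\mapsto e^{at}V(x)$. Since $\mathcal{A}(e^{at}V)(x) = e^{at}(aV(x) + \mathcal{A}V(x)) \le be^{at}$, one obtains, at least formally,
\[
\mathbb{E}_x[V(X(t))] \le e^{-at}V(x) + \frac{b}{a}.
\]
Because $V$ is unbounded and $\mathcal{A}$ need not be bounded, I would localize using the stopping times $T_n = \inf\{t\ge 0 : V(X(t)) \ge n\}$, apply the optional stopping theorem up to $T_n \wedge t$, and then pass to the limit using monotone convergence together with the drift inequality to rule out explosion of $V(X(t))$.

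Next I would use $V(x) \to \infty$ to conclude that each sublevel set $C_r = \{x \in \mathbb{S} : V(x) \le r\}$ is finite. In a countable irreducible state space, any finite set is automatically small: fixing a reference state $x_0$, irreducibility supplies a time $t_0 > 0$ and $\varepsilon > 0$ with $P^{t_0}(x,\{x_0\}) \ge \varepsilon$ for every $x \in C_r$, which yields a minorization of the form $P^{t_0}(x,\cdot) \ge \varepsilon\,\delta_{x_0}(\cdot)$ on $C_r$. Choosing $r$ large enough that $-aV(x) + b \le -\tfrac{a}{2}V(x)$ for $x \notin C_r$ puts us squarely in the hypotheses of the $V$-geometric ergodicity theorem (e.g.\ Theorem~6.1 of \cite{MT-LyaFosterIII}).

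Applying that theorem produces constants $C,\eta>0$ with
\[
\|P^t(x,\cdot) - \pi(\cdot)\|_V \;\le\; C(V(x)+1)e^{-\eta t},
\]
where the $V$-norm is $\|\mu\|_V = \sup_{|g|\le V+1}|\mu(g)|$. Since $V \ge 0$, the total variation norm is dominated by the $V$-norm, giving the desired bound. The main obstacle, in my view, is the rigorous justification of the Dynkin/moment step for an unbounded generator; everything else is bookkeeping within the Meyn--Tweedie machinery, simplified considerably by the countable state space assumption which lets finite sets play the role of small sets.
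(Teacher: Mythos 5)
The paper gives no proof of this theorem at all: it is stated as a known result with a citation to \cite{MT-LyaFosterIII, Tweedie1981}, and your proposal is a correct and complete sketch of exactly that standard Meyn--Tweedie argument (exponential moment bound via a localized Dynkin computation, finite sublevel sets are small by irreducibility on a countable state space, then the $V$-geometric ergodicity theorem, with $\Vert\cdot\Vert_{\TV}$ dominated by the $V$-norm). So your route coincides with the paper's, only fleshed out.
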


Note that if a continuous-time Markov chain $X$ satisfies the conditions in \thmref{thm:exp_ergo} with the   function $V$ defined from \eqref{eq:StandardLyapunov}, 
then the mixing time satisfies $\tau_x^\varepsilon \le \frac{1}{\eta}{\ln{\left(\frac{C(V(x)+1)}{\varepsilon}\right)}} = O(\ln|x|)$. 

The next result utilizes a `super Lyapunov function.'  Such functions will allow us to show that, for certain models,  the convergence is uniform over the initial state.   
This theorem is a version of Theorem 3.2 in \cite{superlya2012}. 

\begin{thm}\label{thm:superlya}
Let $X$ be a continuous-time Markov chain on an irreducible, countable state space $\mathbb{S}\subset \R^d$ with generator $\mathcal A$. Suppose there exists a positive function $V$ on $\mathbb{S}$ satisfying the following. 
\begin{enumerate}[(i.)]
\item $V(x) \to \infty$, as $|x|\to \infty$, and
\item there are positive constants $a$, $b$ and $\delta$ such that
\begin{eqnarray}\label{eq:uni_ergo}
\mathcal{A}V(x) \le -aV(x)^{1+\delta}+b \quad \text{for all} \ \ x \in \mathbb{S}.
\end{eqnarray}
\end{enumerate}
Then there exist positive constants $\eta$ and $C$ such that 
\begin{align*}
\Vert P^t(x,\cdot) - \pi(\cdot) \Vert_{\TV} \le C e^{-\eta t}.
\end{align*}
\end{thm}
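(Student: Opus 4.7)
The plan is to bootstrap from the standard Lyapunov conclusion of Theorem~\ref{thm:exp_ergo} by using the extra power $\delta$ to produce an initial-state-free bound on $\mathbb{E}^x[V(X(t))]$ for any $t>0$. The key analytic observation is that, because $\mathcal{A}V\le -aV^{1+\delta}+b$ is superlinear in $V$, the mean $\mathbb{E}^x[V(X(t))]$ satisfies a differential inequality whose solutions ``come down from infinity'' in finite time; pushing this bound through the Markov semigroup then yields the uniform total-variation estimate.

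First, I would verify that the hypothesis implies the weaker bound $\mathcal{A}V(x)\le -aV(x)+b'$ for some modified constant $b'$: when $V(x)\ge 1$ one has $V(x)^{1+\delta}\ge V(x)$, and on $\{V(x)<1\}$ the gap $aV(x)-aV(x)^{1+\delta}$ is uniformly bounded and can be absorbed into $b'$. Theorem~\ref{thm:exp_ergo} then applies and gives $\|P^t(x,\cdot)-\pi\|_{\TV}\le C_1(V(x)+1)e^{-\eta t}$. The defect of this estimate is that its right side depends on $x$ through $V(x)$, and the goal is to replace $V(x)$ by a constant independent of the initial condition.

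Next, to obtain a uniform-in-$x$ bound on $\mathbb{E}^x[V(X(t))]$, apply Dynkin's formula (with a localization argument to justify integrability) to conclude
\[
\frac{d}{dt}\mathbb{E}^x[V(X(t))] \le -a\,\mathbb{E}^x[V(X(t))^{1+\delta}]+b \le -a\bigl(\mathbb{E}^x[V(X(t))]\bigr)^{1+\delta}+b,
\]
the second inequality being Jensen applied to the convex map $v\mapsto v^{1+\delta}$. Writing $u(t)=\mathbb{E}^x[V(X(t))]$, the differential inequality $u'\le -a u^{1+\delta}+b$ has the crucial feature that even the associated ODE $v'=-a v^{1+\delta}$ satisfies $v(t)\le (a\delta t)^{-1/\delta}$ regardless of $v(0)$; adding back the $b$ term only shifts this by a constant depending on $a,b,\delta$. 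The output is a bound $u(t)\le \phi(t)$ with $\phi(t)$ finite for every $t>0$ and independent of $x$.

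Finally I combine the two ingredients via the Markov semigroup. For any fixed $t_0>0$ and any $t\ge t_0$,
\[
\|P^t(x,\cdot)-\pi\|_{\TV}\le \sum_{y}P^{t_0}(x,y)\,\|P^{t-t_0}(y,\cdot)-\pi\|_{\TV}\le C_1 e^{-\eta(t-t_0)}\bigl(\mathbb{E}^x[V(X(t_0))]+1\bigr)\le C_2 e^{-\eta t},
\]
with $C_2$ independent of $x$ because $\mathbb{E}^x[V(X(t_0))]\le \phi(t_0)$ is. For $t<t_0$ the trivial bound $\|P^t(x,\cdot)-\pi\|_{\TV}\le 1\le e^{\eta t_0}e^{-\eta t}$ absorbs into the constant. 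The main technical obstacle is the Dynkin step: one must show that $\mathbb{E}^x[V(X(t))]$ is finite and differentiable for $t>0$ and that the interchange of expectation and generator is valid, even though $V$ is unbounded. This is handled by a standard exit-time localization $\tau_n=\inf\{t:|X(t)|\ge n\}$ and Fatou's lemma, together with the a priori control furnished by the super-Lyapunov inequality itself, which is what renders $\mathbb{E}^x[V(X(t))]$ finite to begin with.
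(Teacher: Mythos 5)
Your proposal is correct, but note that the paper does not actually prove this statement: it is quoted as a version of Theorem 3.2 of \cite{superlya2012}, so there is no in-paper argument to compare against. Your route is the standard proof of that cited result: (a) weaken the drift to $\mathcal{A}V\le -aV+b'$ to invoke Theorem \ref{thm:exp_ergo}, (b) use Dynkin plus Jensen to get $u'(t)\le -a\,u(t)^{1+\delta}+b$ for $u(t)=\mathbb{E}^x[V(X(t))]$, whose solutions come down from infinity so that $\sup_x \mathbb{E}^x[V(X(t_0))]<\infty$ for any fixed $t_0>0$, and (c) split the semigroup at $t_0$ and average the $x$-dependent bound from (a) over $P^{t_0}(x,\cdot)$. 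All three steps are sound, including the direction of the Jensen inequality and the convexity of total variation used in the splitting. The only place demanding real care is the one you flag: justifying the differential inequality for the unbounded $V$ via stopping times $\tau_n$; there one should check that Jensen still applies to the sub-probability measure $\mathbb{E}[\,\cdot\,\mathbbm{1}_{\{t<\tau_n\}}]$ (it does, since $P(t<\tau_n)^{-\delta}\ge 1$) and that non-explosivity, which follows from $\mathcal{A}V\le b$ with $V\to\infty$, lets you pass to the limit by Fatou. With that detail filled in, your argument is a complete, self-contained proof of the theorem the authors import by citation.
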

Note that in this case we have that the mixing time satisfies $\tau_x^\varepsilon = O(1)$.

\section{Proofs of Theorems \ref{thm:single_mixing} and \ref{thm:doublefull mixing}}
\label{sec:proofs}

 We let
$V:\Z^d_{\ge 0} \to \R_{\ge 0}$ be the following function
\begin{align}\label{eq:StandardLyapunov}
    V(x) = \sum_{i=1}^d \left[ x_i (\ln(x_i) - 1) + 1\right],
\end{align}
with $0(\ln(0)-1)$ taken to be zero.
This function has a long history in the field of chemical reaction network theory:  it has been used to show stability of deterministically modeled reaction networks \cite{Feinberg72, AndBounded_ONE2011, Feinberg87, HornJack72}, to show positive recurrence of stochastic reaction networks \cite{anderson2018some, anderson2020tier, anderson2020stochastically}, and also plays a key role in the study of ``non-equilibrium'' systems in the physics literature (see \cite{anderson2015lyapunov} and followup references).
The basic idea behind the proofs of Theorems \ref{thm:single_mixing} and \ref{thm:doublefull mixing} is to use the infinitesimal behavior of the  Markov process, which can be described via the generator $\A$ defined in \eqref{gen5}, and the  Foster-Lyapunov function $V$ defined above in   \eqref{eq:StandardLyapunov},
to show that the models under consideration admit an ``inward'' drift at every state outside some compact set, in the sense of Theorems \ref{thm:exp_ergo} and \ref{thm:superlya}.  To do so we make use of the tier structures of the model.

We begin by first providing a series of lemmas.


\begin{lem}\label{lemma:tier_lya}
Let $\mathcal A$ be the generator \eqref{gen5} of the continuous-time Markov chain associated to a reaction network $(\S,\C,\Re)$ with mass-action kinetics \eqref{mass} and rate constants $\{\kappa_{y\to y'}\}$. Let $V$ be the function defined in \eqref{eq:StandardLyapunov}.  For each D-type tier-sequence $\{x_n\}$ 
there is a constant $C_1>0$  for which
\begin{align}\label{asymptotic of AV} \A V(x_n) \le \sum_{y\rightarrow y'\in \Re} \lambda_{y\to y'}(x_n)\Big ( \ln{\left(\frac{(x_n \vee 1)^{y'}}{(x_n \vee 1)^y}\right) +C_1 \Big )}.
\end{align}
Furthermore, suppose that there exists a reaction $y_1\to y'_1\in \Re$ such that 
\begin{enumerate}[(i)]
\item $y_1 \in T^{D,1}_{\{x_n\}}$, 
\item $y_1 \succ y_1'$, and 
\item  $\dlim_{n\to \infty} \dfrac{\lambda_{y_1\to y_1'}(x_n)}{(x_n\vee 1)^{y_1}} = C_2$, for some $C_2>0$.
\end{enumerate}
Then there exists a positive constant $C_3$ such that for large enough $n$,
\begin{align}\label{asymptotic of AV 2}
    \mathcal AV(x_n) \le C_3\sum_{\substack{y\to y'\in \Re\\
    y \succ_D y'} }
    \lambda_{y\to y'}(x_n) \ln{\left(\frac{(x_n\vee 1)^{y'}}{(x_n\vee 1)^{y}}\right)},
\end{align}
where the sum is over those reactions, $y\to y'\in \Re$, with $y \succ_D y'$.
 \end{lem}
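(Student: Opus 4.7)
My plan is to handle the two inequalities in turn, both based on a direct calculation of $\mathcal A V(x_n)$ using the coordinate decomposition $V(x) = \sum_i g(x_i)$ with $g(u) = u(\ln u - 1) + 1$.

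For the first bound I would begin from
\[
\mathcal A V(x_n) = \sum_{y \to y' \in \Re} \lambda_{y\to y'}(x_n)\sum_{i=1}^d \bigl[g(x_{n,i}+y_i'-y_i) - g(x_{n,i})\bigr]
\]
and estimate each coordinate difference. The algebraic identity $g(u+c)-g(u)-c\ln u = (u+c)\ln(1+c/u) - c$, combined with the elementary inequality $\ln(1+t)\le t$ for $t>-1$, yields $g(u+c)-g(u)\le c\ln u + c^2/u$ whenever $u\ge 1$ and $u+c\ge 0$. On the boundary $u=0$ the constraint $x_n\ge y$ (in force whenever $\lambda_{y\to y'}(x_n)>0$) forces $c=y_i'-y_i\ge 0$, and direct inspection shows $g(c)-g(0) = c\ln c - c \le 0 = c\ln(0\vee 1)$. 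Because $\C$ is finite the stoichiometric offsets $y_i'-y_i$ take only finitely many values, so I obtain a uniform constant $C_0$ with $g(u+c)-g(u)\le c\ln(u\vee 1) + C_0$ over all relevant $(u,c)$. Summing over $i$ then collapses the logs into $\ln((x_n\vee 1)^{y'}/(x_n\vee 1)^{y})$, and taking $C_1 := dC_0$ delivers the first claim.

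For the second bound I would partition $\Re$ into three groups according to the D-type tier relation between source and product: $A = \{y\succ_D y'\}$, $B = \{y \sim_D y'\}$, $C = \{y \prec_D y'\}$. Let $R_A,R_B,R_C$ denote the corresponding pieces of the bound from part one, and let $T$ denote the sum on the right of \eqref{asymptotic of AV 2}. Two structural observations drive the analysis: first, a tier-$1$ source cannot belong to group $C$ (else some $(x_n\vee 1)^{y'}/(x_n\vee 1)^{y}\to\infty$ with $y\in T^{D,1}_{\{x_n\}}$, contradicting the definition of tier $1$), so every reaction in group $C$ satisfies $(x_n\vee 1)^y = o((x_n\vee 1)^{y_1})$; second, the reaction $y_1\to y_1'$ sits in group $A$, and hypotheses (i) and (iii) together with $y_1\succ_D y_1'$ ensure its contribution to $T$ is of order $-C_2(x_n\vee 1)^{y_1}\bigl|\ln((x_n\vee 1)^{y_1'}/(x_n\vee 1)^{y_1})\bigr|$; since the remaining summands in $T$ are also negative for large $n$ (logs of ratios tending to zero), this already yields $|T|\ge\Theta((x_n\vee 1)^{y_1}\ln(x_n\vee 1))$. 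The three pieces are then estimated using Lemma \ref{lem22}: for $R_A$ each log tends to $-\infty$ so $\ln(\cdot)+C_1\le\tfrac12\ln(\cdot)$ eventually and $R_A\le T/2$; for $R_B$ the logs are bounded and each $\lambda = O((x_n\vee 1)^y) = O((x_n\vee 1)^{y_1})$, giving $R_B = O((x_n\vee 1)^{y_1}) = o(|T|)$; for $R_C$, $\lambda = O((x_n\vee 1)^y) = o((x_n\vee 1)^{y_1})$ while the log factor is $O(\ln(x_n\vee 1))$, giving $R_C = o(|T|)$. Combining for all sufficiently large $n$ gives $R_A+R_B+R_C\le T/2 + |T|/4 = T/4$ (using $T<0$), which is exactly the claim with $C_3 = 1/4$.

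The main technical obstacles I anticipate are (a) the boundary case $u=0$ in part one, which is resolved cleanly by the finiteness of $\C$ together with the $x_n\ge y$ constraint, and (b) the quantitative asymptotic domination in part two. For (b) the delicate point is to verify that $|T|$ truly grows at least as $(x_n\vee 1)^{y_1}\ln(x_n\vee 1)$; this depends essentially on hypothesis (iii) giving a positive leading constant for the $y_1\to y_1'$ summand, and on the observation that all other $y\succ_D y'$ summands carry the same sign and so cannot produce cancellation. Once these are in place the remaining estimates reduce to the routine application of Lemma \ref{lem22} outlined above.
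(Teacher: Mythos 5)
Your overall strategy matches the one the paper relies on: the paper does not prove this lemma in-house but defers to Lemma~8 and the proof of Theorem~9 of \cite{anderson2018some}, where the argument is exactly the one you outline (expand $\mathcal A V$ coordinatewise, use $\ln(1+t)\le t$ for the first bound, then split $\Re$ by the tier relation between source and product and let the descending tier-1 reaction dominate). Your first part is correct: the identity $g(u+c)-g(u)-c\ln u=(u+c)\ln(1+c/u)-c$ and the boundary case $u=0$ are handled properly; the only blemish is the parenthetical claim that $c\ln c-c\le 0$, which fails for $c\ge 3$, but your appeal to the finiteness of the set of stoichiometric offsets already covers this, so nothing is lost.

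The gap is in the treatment of the group $C=\{y\to y'\in\Re:\ y'\succ_D y\}$ in the second part. You assert $|T|\ge\Theta\bigl((x_n\vee 1)^{y_1}\ln(x_n\vee 1)\bigr)$ and then dominate $R_C\le o\bigl((x_n\vee 1)^{y_1}\bigr)\cdot O\bigl(\ln(x_n\vee 1)\bigr)$ by $|T|$. But hypothesis (ii) only gives $(x_n\vee 1)^{y_1'}/(x_n\vee 1)^{y_1}\to 0$ with no rate: for a D-type tier-sequence such as $x_n=(\lfloor n\ln n\rfloor,\,n)^T$ with $y_1=S_1$ and $y_1'=S_2$, the logarithm of this ratio is only of order $\ln\ln n$, far smaller than $\ln\|x_n\|_\infty$. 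So the claimed lower bound on $|T|$ is false in general, and $R_C=o(|T|)$ does not follow from the two estimates you give. The fix is to use the product complex rather than the source: for $y\to y'$ in group $C$, write $\lambda_{y\to y'}(x_n)\ln\rho_n\le\kappa_{y\to y'}(x_n\vee 1)^{y}\ln\rho_n=\kappa_{y\to y'}(x_n\vee 1)^{y'}\rho_n^{-1}\ln\rho_n$ with $\rho_n=(x_n\vee 1)^{y'}/(x_n\vee 1)^{y}\to\infty$; since $y_1\in T^{D,1}_{\{x_n\}}$ forces $(x_n\vee 1)^{y'}=O\bigl((x_n\vee 1)^{y_1}\bigr)$ for every complex $y'$, and $\rho_n^{-1}\ln\rho_n\to 0$, this gives $R_C=o\bigl((x_n\vee 1)^{y_1}\bigr)$, which is $o(|T|)$ using only $|T|/(x_n\vee 1)^{y_1}\to\infty$ --- the same fact you already need (and correctly establish) for $R_B$. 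With that replacement the rest of your argument, including $R_A\le T/2$ and the final constant $C_3=1/4$, goes through.
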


The proof of \eqref{asymptotic of AV} can be found in \cite[Lemma 8]{anderson2018some}. The proof of \eqref{asymptotic of AV 2} can be found in the proof of \cite[Theorem 9]{anderson2018some}.

The next lemma gives a condition for when \eqref{asymptotic of AV 2} holds.

\begin{lem}\label{lemma:tier_topology}
Let $(\S,\C, \Re)$, with $\S=\{S_1,\dots,S_d\}$, be a weakly reversible, binary reaction network that has a single connected component. Suppose that  $\C$ is not a subset of $\{S_i+S_j : i,j \in \{1,\dots, d\}\}$. Let $\widetilde{\C} = \C \cup\{\emptyset\}\cup\{S_i \ | \ S_i \in \S\}$ and $\widetilde \Re =  \Re \cup_{S_i \in \S} \{\emptyset \to S_i, S_i \to \emptyset\}$.
 Then, for any D-type tier-sequence $\{x_n\}$ for $(\S,\widetilde \C,\widetilde \Re)$, there exists a reaction $y_1 \to y_1' \in \widetilde \Re$ such that
 \begin{enumerate}[(i)]
 \item $y_1 \in T^{D,1}_{\{x_n\}}$, 
 \item $y_1 \succ_{D} y_1'$ and 
 \item $\dlim_{n\to \infty} \dfrac{\lambda_{y_1\to y_1'}(x_n)}{(x_n\vee 1)^{y_1}} = C$, for some $C>0$. 
 \end{enumerate}
\end{lem}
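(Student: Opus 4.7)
My plan is to split into two cases depending on whether $T^{D,1}_{\{x_n\}}$ contains a unary complex, handling the first via an outflow reaction in $\widetilde\Re$ and the second via a reachability argument in $\Re$. Before splitting, I would record two preliminary facts that follow immediately from the tier definition. Since at least one coordinate of $x_n$ diverges to infinity and all of $S_1,\dots,S_d$ lie in $\widetilde\C$, the zero complex is dominated by some $S_j$ and hence $\emptyset\notin T^{D,1}$; and if $S_i\in T^{D,1}$, the same comparison forces $x_{n,i}\to\infty$.

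Case A: if some unary $S_i$ lies in $T^{D,1}$, I would choose $(y_1,y_1')=(S_i,\emptyset)\in\widetilde\Re$. Condition (i) holds by assumption, (ii) holds since $\emptyset\notin T^{D,1}$, and (iii) follows from \lemref{lem22} together with $\lim_{n\to\infty}\lambda_{S_i\to\emptyset}(x_n)=\kappa_{S_i\to\emptyset}\lim x_{n,i}=\infty$.

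Case B: if $T^{D,1}$ contains no unary complex, then combined with $\emptyset\notin T^{D,1}$ and with the fact that $\widetilde\C\setminus\C$ contains only non-binary complexes, $T^{D,1}$ consists entirely of binary complexes belonging to $\C$. I would then prove the key coordinate-divergence claim: for every $y\in T^{D,1}$ and every index $i$ with $y_i>0$, we have $x_{n,i}\to\infty$. This is immediate when $y=2S_i$ is a double complex. For $y=S_i+S_j$ with $i\neq j$ I would argue by contradiction: if $x_{n,j}$ stayed bounded, then $(x_n\vee 1)^{S_i+S_j}$ and $(x_n\vee 1)^{S_i}$ would differ only by a bounded positive factor, which combined with $S_i+S_j\in T^{D,1}$ would force $S_i\in T^{D,1}$, contradicting the case hypothesis. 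With this claim in hand, \lemref{lem22} gives $\lim_{n\to\infty}\lambda_{y\to y'}(x_n)/(x_n\vee 1)^y>0$ for every reaction $y\to y'$ with $y\in T^{D,1}\cap\C$. To locate such a reaction that also leaves $T^{D,1}$, I invoke the hypothesis that $\C$ contains a non-binary complex $z$; by the preliminary facts $z\notin T^{D,1}$. Weak reversibility combined with the single linkage class hypothesis produces a directed path $y=w_0\to w_1\to\cdots\to w_m=z$ in $\Re$ starting from any chosen $y\in T^{D,1}\cap\C$, and picking the smallest $k$ with $w_{k+1}\notin T^{D,1}$ yields the desired reaction $(y_1,y_1')=(w_k,w_{k+1})\in\Re\subseteq\widetilde\Re$.

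The hard part will be the mixed-binary bookkeeping in Case B: ruling out the pathological scenario in which $S_i+S_j\in T^{D,1}$ but $\lambda_{S_i+S_j\to y'}(x_n)$ vanishes along $\{x_n\}$ because one of $x_{n,i}$ or $x_{n,j}$ is zero. The contradiction argument via $S_i\in T^{D,1}$ described above is what forecloses this loophole and promotes the tier structure from the level of monomials to the level of intensities, which is exactly what condition (iii) demands.
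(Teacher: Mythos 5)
Your proposal is correct and follows essentially the same route as the paper's proof: the outflow reaction handles the case of a unary complex in tier 1, weak reversibility with the single linkage class supplies a directed path from a tier-1 binary complex to a non-binary complex outside tier 1, and the observation that a bounded coordinate of a mixed binary tier-1 complex would force the corresponding unary complex into tier 1 is exactly the paper's mechanism for verifying condition (iii). The only difference is cosmetic: you establish coordinate divergence for all tier-1 source complexes up front, whereas the paper checks it case-by-case on the specific reaction it selects.
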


\begin{proof}
First note  that $\emptyset \notin \Td1$.  This follows because there is at least one $i\in \{1,\dots, d\}$ with  $x_{n,i}\to \infty$, since $\{x_n\}$ is a D-type tier sequence,   and so  $S_i \succ_D \emptyset$. 

Next, we observe that if  $S_i \in \Td1$ for some $i$, then we are done because in this case the reaction $S_i \to \emptyset$ satisfies all of $(i), (ii),$ and $(iii)$. 

Now suppose that there is no $S_i \in \Td1$. Then it must be the case that $S_i +S_j \in \Td1$ for some $i,j$ (we could have $i = j$).  However, because $\C$ does not consist solely of binary complexes, we must have at least one complex, $\tilde y\in \C$, with $|\tilde y| \in \{0,1\}$ (i.e., it is either unary or the zero complex).  In particular, we know that $\tilde y \notin \Td1$.
 Hence,  by the weak reversibility of $(\S,\C,\Re)$, there is a sequence of reactions starting with $S_i+S_j$ and ending with a complex not contained in $\Td1$.  This implies that there is at least one reaction, $y\to y'\in \Re$, with $y \in \Td1$ and $y \succ_D y'$.  That is, conditions (i) and (ii) are satisfied for this reactions.
 
 We now look closer at $y$, the source complex of this reaction.  If $y = 2S_k$ for some $k$, then we have that this reaction also satisfies (iii) with $x_{n,k} \to \infty$, as $n\to \infty$, and we are done.  Similarly, if $y = S_k + S_\ell$ for some $k\ne \ell$, and both $\lim_{n\to \infty} x_{n,k} > 0$ and $\lim_{n\to \infty} x_{n,\ell} > 0$, then   Lemma \ref{lem22} may be used to imply that (iii) holds as well, and we are done.  The final case to consider is when $y= S_k + S_\ell$ with $\lim_{n\to \infty} x_{n,k} > 0$ and $\lim_{n\to \infty} x_{n,\ell} = 0$.  However, this would imply that $S_k \in \Td1$ as well, which is impossible by assumption.  Hence, all cases are covered and the proof is complete.
\end{proof}

While the inflow reactions are not crucial, the outflows in $\widetilde{\mathcal R}$ is necessary for Lemma \ref{lemma:tier_topology}.  The next example demonstrates.

\begin{example}
Let $(\S,\C,\Re)$ be a reaction network formed with the reactions in the `triangle' within the reaction network shown in Example \ref{ex:tier}.  That is, 
\begin{align*}
    &2A \ \ \longrightarrow \ \ B  \\[-0.5ex]
    &  \ \displaystyle \nwarrow \hspace{0.8cm}  \swarrow \\[-0.5ex]
& \hspace{0.4cm} A+B.
\end{align*}
Let $(\S,\widetilde \C,\widetilde \Re)$ be the full network in Example \ref{ex:tier}, where all the inflows and outflows are included. For the D-type tier-sequence $x_n=(n,0)^\top$,  the reaction $2A\to B$ satisfies all the conditions (i), (ii) and (iii) in Lemma \ref{lemma:tier_topology}. For the D-type tier sequence $x_n=(0,n)^\top$, the  reaction $A+B \to 2A$ satisfies (i) and (ii) in Lemma \ref{lemma:tier_topology}. However (iii) does not hold because the rate function $\lambda_{A+B\to 2A}$ is zero at $x_n$ for each $n$. In this case, the out-flow reaction $B\to \emptyset$ satisfies (i), (ii), and (iii) in Lemma \ref{lemma:tier_topology}. \hfill $\triangle$
\end{example}

Now we begin the proof of Theorem \ref{thm:single_mixing}. 




\begin{proof}[Proof of Theorem \ref{thm:single_mixing}]
We first suppose the existence of a $w\in \R^d_{>0}$ with $w\cdot (y'-y)=0$ for all $y \to y'\in \Re$. Let $W(x) = w\cdot x = \sum_{i = 1}^d w_i x_i$.  Note that  $\displaystyle \lim_{|x|\to \infty} |W(x)| = \infty$ since $w \in  \mathbb R^d_{>0}$.  For each $i \in \{1,\dots, d\}$, let $e_i\in \Z^d_{\ge 0}$ be the vector with a 1 in the $i$th coordinate and zeros elsewhere.  Since $w$ conserves the reactions in $\Re$, we have
\begin{align*}
\mathcal{A}W(x) 
&= \sum_{y\to y'\in \Re} \lambda_{y\to y'}(x) (W(x+y'-y) - W(x)) \\
&\hspace{.1in}+ \sum_{i=1}^d  \lambda_{S_i \to \emptyset } (x) (W(x-e_i)-W(x))+ \sum_{i=1}^d  \lambda_{\emptyset \to S_i} (x) (W(x+e_i)-W(x))\\
&= - \sum_{i=1}^d \kappa_{S_i \to \emptyset} \cdot w_i x_{i} + \sum_{i=1}^d \kappa_{\emptyset \rightarrow S_i} w_i \\
&\le -aW(x) + b,
\end{align*}
where $a = \min\{\kappa_{S_i \to \emptyset} \}$ and $b = \sum_{i=1}^d \kappa_{\emptyset \rightarrow S_i} w_i$.  
Hence, \eqref{eq:exp_ergo} holds with $V$ replaced with $W$, giving us the second statement of the theorem, by Theorem \ref{thm:exp_ergo}.

 Now we assume that no such $w$ exists.  Note that this implies  that neither $\C \subset \{2S_i : S_i \in \S\}$ nor $\C \subset \{S_i : S_i \in \S\}$.  We claim that there exists a compact set $K \subset \mathbb{S}$ such that for some positive constant $a$, the function $V$ defined in \eqref{eq:StandardLyapunov} satisfies
\begin{align}\label{eq:claim for single expo}
\mathcal{A}V(x) \le -aV(x) \quad \text{for all} \ \ x\in \mathbb{S}\setminus K.
\end{align}
If this claim holds, then the result follows from Theorem \ref{thm:exp_ergo} after setting $b=(a+1)\displaystyle \max_{x\in K} \mathcal AV(x)$.

We prove the claim by contradiction. We therefore suppose that there is not a compact $K$, together with an $a>0$, so that \eqref{eq:claim for single expo}  holds. In this case,  there exists a sequence $\{x_n\}\subset\mathbb{S}$ such that
\begin{align}\label{eq:negation for single expo}
\lmt||x_n||_{\infty}=\infty \quad \text{and} \quad \mathcal AV(x_n)\ge -\frac{1}{n}V(x_n), \ \ \text{for all} \ \ n.
\end{align}
By Remark \ref{rem:tier seq}, we can find a subsequence of $\{x_n\}$ that is a D-type tier-sequence. For ease of notation, we also denote this subsequence by $\{x_n\}$. By considering a further subsequence if needed, we also can assume that there exists an $i$ such that
\begin{align}\label{eq:88787878}
    \lim_{n\to \infty}\frac{x_{n,i}}{x_{n,j}} >0, \quad \text{for any $j$,}
\end{align}
where we denote the $i$th  coordinate of $x_n$ by $x_{n,i}$.  Because of \eqref{eq:88787878}, we will say that  $x_{n,i}$ is  a ``maximal coordinate,'' since no other element is asymptotically larger. Without loss of generality, we assume $x_{n,1}$ is such a maximal coordinate of $x_n$. That is, there exists a positive constant $C'$ such that for each $i$
\begin{align*}
x_{n,i} \le C'x_{n,1}, \quad \text{for all $n$ large enough}.
\end{align*}
 
By Lemmas \ref{lemma:tier_lya} and \ref{lemma:tier_topology} there is a $C_1>0$ for which  \eqref{asymptotic of AV 2} holds for all $n$ large enough. That is,
\begin{align}\label{asymptotic of AV 3}
\mathcal AV(x_n) \le C_1 \sum_{\substack{y\to y'\in \Re\\
y \succ_D y'} } \lambda_{y\to y'}(x_n) \ln{\left(\frac{(x_n\vee 1)^{y'}}{(x_n\vee 1)^{y}}\right)}.
\end{align}
Note that because we are assuming that $x_{n,1}$ is a maximal coordinate,  it holds that $S_1 \succ \emptyset$, and so the reaction $S_1 \to \emptyset$ is included in the sum above.  Since each term in the summation of the right-hand side of \eqref{asymptotic of AV 3} is negative for large $n$, the inequality stands after dropping other terms except for the term related to $S_1 \to \emptyset$. Hence we have
\begin{align*}
\A V(x_n) \le C_1 \lambda_{S_1 \to \emptyset}(x_n)\ln{\left(\frac{(x_n\vee  1)^{(0,0,\dots,0)}}{(x_n\vee 1)^{(1,0,\dots,0)}}\right)}= -C_1\lambda_{S_1\to \emptyset}(x_n)\ln{(x_{n,1}\vee 1)}.
\end{align*}
However, since we assumed $x_{n,1}$ is a maximal coordinate of $x_n$, and $\lmt x_{n,1}=\infty$ by construction,   we can find some positive constant $C_2$ such that $V(x_n) \le C_2 x_{n,1}\ln(x_{n,1})$ for all $n$ large enough. Combining the above, there is a $C_3>0$ so that for all $n$ large enough we have
\begin{align*}
\A V(x_n) &\le -C_1\lambda_{S_1 \to \emptyset}(x_n)\ln{(x_{n,1}\vee 1)}.\\
& \le -C_3 V(x_n).
\end{align*}
This contradicts \eqref{eq:negation for single expo}, and the result is shown.
\end{proof}
\begin{rem}
 Since $(\S,\widetilde \C, \widetilde \Re)$ is weakly reversible in Theorem \ref{thm:single_mixing}, the associated Markov process $X$ with $X(0)=x$ is irreducible.  Moreover, because $\widetilde \Re$ is open, the irreducible state space is all of $\Z^d_{\ge 0}$. 
\end{rem}

We now turn to the proof of Theorem \ref{thm:doublefull mixing}.
We begin with a lemma that characterizes tiers of double-full binary reaction networks.

\begin{lem}\label{lem:tier and double}
Let $(\S,\C, \Re)$ be a binary reaction network satisfying the same conditions given in Theorem \ref{thm:doublefull mixing}.
Let $\{x_n\}$ be a D-type tier-sequence.  Then the following hold. 
\begin{enumerate}[(i)]
    \item For $y\to y'\in \Re$ with $y \in T^{D,1}_{\{x_n\}}$, we have that $\lambda_{y\to y'}(x_n) > 0$ for all  $n$ large enough,
    \item $2S_i \in T^{D,1}_{\{x_n\}}$ for some $i$, and
    \item There exists $y\to y' \in \Re$ such that $y\in T^{D,1}_{\{x_n\}}$ and $y \succ y'$.
\end{enumerate}
\end{lem}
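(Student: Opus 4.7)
The plan is to first pin down a coordinate of $x_n$ that dominates asymptotically and then exploit the binary and double-full structure against it. Because $\{x_n\}$ is a D-type tier-sequence and double-fullness places every $2S_j$ in $\C$, the ratio $(x_n\vee 1)^{2S_j}/(x_n\vee 1)^{2S_i}=((x_{n,j}\vee 1)/(x_{n,i}\vee 1))^2$ converges in $[0,\infty]$ for every pair $i,j$. Starting from any index $i$ with $x_{n,i}\to\infty$ (which exists by the tier-sequence definition) and repeatedly replacing $i$ by $j$ whenever $\lim x_{n,j}/x_{n,i}=\infty$, finiteness of $\S$ terminates at an index $i^*$ with $\lim x_{n,j}/x_{n,i^*}\in[0,\infty)$ for every $j$. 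Consequently there is a constant $C>0$ such that $(x_{n,j}\vee 1)\le C(x_{n,i^*}\vee 1)$ for all $j$ and all sufficiently large $n$. For part (ii), any $y\in\C$ satisfies $|y|\le 2$, so $(x_n\vee 1)^y=\prod_{j}(x_{n,j}\vee 1)^{y_j}\le C^{|y|}(x_{n,i^*}\vee 1)^{|y|}\le C^{2}(x_n\vee 1)^{2S_{i^*}}$, whence $\lim(x_n\vee 1)^{2S_{i^*}}/(x_n\vee 1)^y\ge C^{-2}>0$; since $2S_{i^*}\in\C$ by double-fullness, this places $2S_{i^*}$ in $T^{D,1}_{\{x_n\}}$. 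The same computation records that $\emptyset$ and every unary complex $S_j$ fall out of $T^{D,1}_{\{x_n\}}$, since their ratio against $2S_{i^*}$ vanishes because $x_{n,i^*}\to\infty$.

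For part (i), I would fix any $y\in T^{D,1}_{\{x_n\}}$, so that $y\sim_D 2S_{i^*}$ and $(x_n\vee 1)^y$ grows at the rate $(x_{n,i^*}\vee 1)^2\to\infty$. A case analysis on the form of the binary complex $y$ then yields the componentwise inequality $x_n\ge y$ for all large $n$. If $y=S_j$ or $y=2S_j$, the divergence of $(x_n\vee 1)^y$ forces $x_{n,j}\to\infty$, which exceeds $y_j\in\{1,2\}$ eventually. If $y=S_j+S_k$ with $j\ne k$, suppose for contradiction that (say) $x_{n,j}$ remains bounded along a subsequence; then $(x_n\vee 1)^y\le\mathrm{const}\cdot(x_{n,k}\vee 1)\le\mathrm{const}\cdot(x_{n,i^*}\vee 1)=o((x_{n,i^*}\vee 1)^2)$, contradicting tier-1 membership. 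Hence both $x_{n,j}$ and $x_{n,k}$ tend to infinity, giving $x_{n,j},x_{n,k}\ge 1$ eventually. By the mass-action form \eqref{mass} we conclude $\lambda_{y\to y'}(x_n)>0$ for all large $n$.

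For part (iii), condition (ii) of Theorem \ref{thm:doublefull mixing} applied to $2S_{i^*}\in T^{D,1}_{\{x_n\}}$ supplies a directed path $2S_{i^*}=z_0\to z_1\to\cdots\to z_m$ in $\Re$ whose terminal vertex is either $\emptyset$ or a unary complex. By the observation recorded at the end of the first paragraph, $z_m\notin T^{D,1}_{\{x_n\}}$, so the path must contain a consecutive pair $z_\ell\to z_{\ell+1}$ with $z_\ell\in T^{D,1}_{\{x_n\}}$ and $z_{\ell+1}\notin T^{D,1}_{\{x_n\}}$, which gives $z_\ell\succ_D z_{\ell+1}$ as required. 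I expect the main technical obstacle to be the case $y=S_j+S_k$ with $j\ne k$ in part (i): one must combine tier-1 membership with the maximality of $i^*$ to rule out the possibility that one coordinate remains bounded while the other grows. The remaining cases and parts follow more or less mechanically from the choice of $i^*$ and the binary/double-full structure of the network.
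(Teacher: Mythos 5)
Your proof is correct and follows the same route as the paper's (very terse) argument: a maximal coordinate $i^*$ places $2S_{i^*}$ in $T^{D,1}_{\{x_n\}}$ via double-fullness and binarity, tier-1 source complexes must have all their coordinates diverging so their intensities are eventually positive, and the directed-path hypothesis forces a reaction crossing out of tier 1. The only difference is that you spell out the details (the choice of $i^*$, the case analysis for (i)) that the paper leaves implicit.
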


\begin{proof}
There are no complexes with $\|y\|_{\ell_1} > 2,$ and our assumptions imply that each double complex is the source complex for some reaction. Hence,  (i) and (ii)  follow.  

To see that (iii) holds, you simply have to note that no unary complex (or the zero complex) can be in $T^{D,1}_{\{x_n\}}$.  (iii) then follows from  the existence of a directed path from any double complex to a unary complex or the zero complex. 
\end{proof}

We now prove Theorem \ref{thm:doublefull mixing}. 


\begin{proof}[Proof of Theorem \ref{thm:doublefull mixing}]
We will show that $V$ defined in \eqref{eq:StandardLyapunov} satisfies the conditions of Theorem \ref{thm:superlya} with $\delta = 1/2$ (and value $\delta \in (0,1)$ would work). 
As in the proof of Theorem \ref{thm:single_mixing}, we proceed via a proof by contradiction. Therefore, in order to find a contradiction, we assume that there is no compact set $K\subset \mathbb{S}$, together with an $a>0$, for which  
\begin{align*}
    \mathcal AV(x)\le -a V(x)^{3/2} \quad \text{for all $x \in \mathbb S\setminus K$}.
\end{align*}
Then there exists a sequence $\{x_n\}$, with $x_n \in \mathbb{S}$, so that
\begin{align}\label{eq:negation for double}
\lmt \|x_n\|_{\infty} =\infty \quad \text{and} \quad \mathcal AV(x_n)\ge -\frac{1}{n}V(x_n)^{3/2}, 
\end{align}
for all $n$.  By Remark   \ref{rem:tier seq}, we may extract a subsequence that is a D-type tier sequence and that still satisfies \eqref{eq:negation for double}.  As before, for ease of notation we denote this subsequence by $\{x_n\}$.

By result (ii) in Lemma \ref{lem:tier and double}, there is a species $S_i$ for which  $2S_i \in T^{D,1}_{\{x_n\}}$. We will now show that  there must be a $C>0$ with 
\[
    \mathcal AV(x_n) \le -C x_{n,i}^2,
    \]
    which would contradict \eqref{eq:negation for double}.

By    Lemma \ref{lem:tier and double}, there is a $y_1\to y_1'\in \Re$ for which  $y_1 \in T^{D,1}_{\{x_n\}}$ and $y_1 \succ y'_1$.  By Lemma \ref{lem22} and Lemma \ref{lemma:tier_lya} (i) we also have  that the limit in (iii) of  Lemma \ref{lemma:tier_lya} holds.  Hence,  we may  utilize \eqref{asymptotic of AV 2}, with a bound similar to \eqref{asymptotic of AV 3} in the proof of Theorem \ref{thm:single_mixing}, to conclude that there is a $C_2>0$ so that for $n$ large enough we have 
\begin{align}\label{asymptotic of AV 4}
\mathcal AV(x_n) \le C_2 \lambda_{y_1\to y_1'}(x_n)\ln{\left(\frac{(x_n\vee 1)^{y'_1}}{(x_n\vee 1)^{y_1}}\right)} \le  -C_2 \lambda_{y_1\to y_1'}(x_n),
\end{align}
where the second inequality holds since $\ln{\left(\dfrac{(x_n\vee 1)^{y'_1}}{(x_n\vee 1)^{y_1}}\right)} \to -\infty$, as $n \to \infty$.
Since we know that $2S_i \in T^{D,1}_{\{x_n\}}$, we can then conclude that there is a $C>0$ so that 
\begin{align*}
    \mathcal AV(x_n) \le -C_2 \lambda_{y_1\to y_1'}(x_n)  \le  -C x_{n,i}^2.
\end{align*}
This was our goal, so we are done.
\end{proof}

\section{Extension of Theorem \ref{thm:single_mixing} for exponential ergodicity}
\label{sec:generalizations}

In this section, we generalize Theorem \ref{thm:single_mixing} to provide four more classes of reaction networks for which the associated Markov chains with intensity functions \eqref{mass} are exponentially ergodic. Note that we no longer have previous results guaranteeing  the models under consideration in this section are positive recurrent.  Hence, existence of $\pi$ is now part of each result.  Note, however, that this fact follows immediately since the existence of positive $a$ and $b$ for which $\mathcal{A} V(x) \le -aV(x) + b$ outside a compact immediately implies that $\mathcal{A}V(x) \le -1$ outside that same compact (since $V$ is positive and bounded from below).

The first generalization arises by noting that the proof of Theorem \ref{thm:single_mixing} only utilizes the presence of the outflow reactions.  Thus, the theorem  can be generalized to only allow for a portion of (or even none of)  the inflow reactions.  However, in this case the system is not open and so $\Z^d_{\ge 0}$ may not be a closed communication class, and so the generalized statement  must restrict to a particular closed class.

 \begin{cor}\label{cor:only out flows}
 Let $(\S,\C, \Re)$ be a weakly reversible, binary reaction network that has a single linkage class and $\S=\{S_1,\dots,S_d\}$.   Let $\widetilde{\C} = \C \cup\{\emptyset\}\cup\{S_i \ | \ S_i \in \S\}$ and $\widetilde \Re  = \Re \cup_{S_i\in \S} \{ S_i\to \emptyset\} \cup_{S_i\in \mathcal{I}} \{\emptyset \to S_i\},$ where $\mathcal{I}\subset \S$ is any subset of the species (including the empty set). Let $\K=\{\kappa_{y\to y'}\}$ be a choice of reaction rate constants.  
 Let $X(0) = x \in   \Z^d_{\ge 0}$ be in a closed communication class, $\mathbb{S}$.  Then there is a stationary distribution $\pi$ on that class.  Moreover,
 there exist  $C,\eta>0$ so that the Markov process $X$ with intensity functions \eqref{mass} associated to the reaction network $(\S,\widetilde{\C},\widetilde \Re)$ satisfies
\begin{align*}
\Vert P^t(x,\cdot) - \pi(\cdot) \Vert_{\TV} \le C(|x|+1)\ln(|x|+2) e^{-\eta t}, \quad \text{for all } t \ge 0,
\end{align*}
where  neither $C$ nor $\eta$ depend upon $x\in \mathbb{S}$.
  Thus,  the  mixing time  satisfies $\tau_x^\varepsilon = O(\ln(|x|))$, as $|x|\to \infty$.  
 
 Suppose now that there is a $w\in \R^{d}_{>0}$ for which $w\cdot (y'-y)=0$ for all $y\to y'\in \Re$.  Let $X(0) = x \in   \Z^d_{\ge 0}$ be in a closed communication class, $\mathbb{S}$.   Then there is a stationary distribution $\pi$ on that class.  Moreover,
 there exist  $C,\eta>0$ so that the Markov process $X$ with intensity functions \eqref{mass} associated to the reaction network $(\S,\widetilde{\C},\widetilde \Re)$ satisfies
\begin{align*}
\Vert P^t(x,\cdot) - \pi(\cdot) \Vert_{\TV} \le C(|x|+1) e^{-\eta t}, \quad \text{for all } t \ge 0,
\end{align*}
where neither $C$ nor $\eta$ depend upon $x \in \mathbb{S}$.
Thus, the mixing time  satisfies $\tau_x^{\varepsilon} = O(\ln(|x|))$, as $|x|\to \infty$.
 \end{cor}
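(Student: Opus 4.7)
The plan is to re-run the proof of Theorem \ref{thm:single_mixing} with essentially no changes, modulo two cosmetic adjustments: we must accommodate the possibly-absent inflow reactions $\emptyset \to S_i$ for $S_i \notin \mathcal{I}$, and we must replace the ambient state space $\Z^d_{\ge 0}$ with the closed communication class $\mathbb S$. The central observation is that the contradiction argument in the proof of Theorem \ref{thm:single_mixing} draws all of its strength from the \emph{outflow} reactions $S_i \to \emptyset$, which are all still present in $\widetilde \Re$; the inflows enter only through the constant $b$ of the Foster--Lyapunov inequality and play no role in the tier analysis.

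For the second (conservative) claim I would set $W(x) = w \cdot x$ and compute $\A W$ exactly as in the opening paragraph of the proof of Theorem \ref{thm:single_mixing}. Conservation of $w$ under the reactions in $\Re$ annihilates their contribution; the outflows contribute $-\sum_i \kappa_{S_i\to \emptyset} w_i x_i$, which dominates the non-negative constant $\sum_{S_i \in \mathcal{I}} \kappa_{\emptyset \to S_i} w_i$ coming from the partial inflows. This yields $\A W(x) \le -aW(x) + b$ on all of $\Z^d_{\ge 0}$, hence on $\mathbb S$, with $a = \min_i \kappa_{S_i \to \emptyset} > 0$.

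For the first claim, I would reuse the contradiction argument verbatim with $V$ as in \eqref{eq:StandardLyapunov}. Assuming no compact set $K$ and $a>0$ give $\A V \le -aV$ on $\mathbb S \setminus K$, I would extract a D-type tier-subsequence $\{x_n\} \subset \mathbb S$ with a maximal coordinate $x_{n,1}$ via Remark \ref{rem:tier seq}. The key external input is Lemma \ref{lemma:tier_topology} applied to $(\S, \widetilde \C, \widetilde \Re)$: a direct inspection of its proof confirms that only the outflows $S_i \to \emptyset$ and the weak reversibility of the original network $(\S,\C,\Re)$ are used, so the lemma still produces a reaction $y_1 \to y_1'$ with $y_1 \in T^{D,1}_{\{x_n\}}$, $y_1 \succ_D y_1'$, and nontrivial intensity ratio. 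Combining this with Lemma \ref{lemma:tier_lya} yields a bound of the form $\A V(x_n) \le -C_1 \lambda_{S_1 \to \emptyset}(x_n) \ln(x_{n,1}\vee 1) \le -C V(x_n)$, contradicting the defining property of $\{x_n\}$. Setting $b = (a+1)\max_{x\in K}\A V(x)$ as in the proof of Theorem \ref{thm:single_mixing} upgrades this to $\A V(x) \le -aV(x) + b$ on $\Z^d_{\ge 0}$, which then restricts to $\mathbb S$.

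The last step is to combine the drift inequality with Theorem \ref{thm:exp_ergo} applied to the irreducible chain on $\mathbb S$. Existence of the stationary distribution $\pi$ on $\mathbb S$ follows immediately from the opening remark of this section: since $V$ is bounded below, the bound $\A V \le -aV + b$ implies $\A V \le -1$ outside a (possibly larger) compact set, which gives positive recurrence on $\mathbb S$ via the Foster criterion used in \cite{anderson2018some}. Theorem \ref{thm:exp_ergo} then delivers the claimed total-variation decay with $C$ and $\eta$ independent of $x \in \mathbb S$, and the mixing-time bound $\tau_x^\varepsilon = O(\ln|x|)$ follows as in \eqref{eq:90878097}. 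I do not expect any substantively new obstacle here; the one point that warrants care is verifying that Lemma \ref{lemma:tier_topology} genuinely makes no use of the inflow reactions, which a careful reading of its short proof makes transparent.
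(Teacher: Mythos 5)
Your proposal is correct and follows exactly the route the paper intends: the paper gives no separate proof of Corollary \ref{cor:only out flows}, justifying it only by the remark that the proof of Theorem \ref{thm:single_mixing} (including Lemma \ref{lemma:tier_topology}) uses only the outflow reactions, that the inflows enter solely through the constant $b$, and that existence of $\pi$ on the closed class $\mathbb{S}$ follows from the drift inequality since $V$ is bounded below. Your write-up is a faithful, slightly more detailed version of that same argument.
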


\begin{example} Consider  the system with reaction graph in \eqref{eq:example_model1} except $C \to \emptyset$ is removed, i.e. the network with reaction graph
 \begin{align*}
     &\hspace{.33in}C \\
    &\hspace{.3in}\uparrow \\
    &\hspace{.35in} \emptyset \\
    & \nearrow\hspace{-.07in}\swarrow \hspace{.2in} \nwarrow\hspace{-0.07in}\searrow\\
        &A \xrightarrow{\ \ \ \ \ \ \ \ \ } B\\
&  \   \nwarrow \hspace{.3in} \swarrow   \\
& \hspace{0.3in} 2C.
 \end{align*}
 For this model $\Z^d_{\ge 0}$ is irreducible, and there is a unique stationary distribution on $\Z^d_{\ge0}$.  Moreover, because of Corollary \ref{cor:only out flows}, it is also exponentially ergodic with a mixing time that is bounded above by the logarithm of the initial counts. \hfill $\triangle$
\end{example}

As noted in Section \ref{sec:proofs}, the results of Lemma \ref{lemma:tier_topology} are crucial in proving Theorem \ref{thm:single_mixing}. Hence it is possible to find other  structural conditions for the reaction network that guarantee exponential ergodicity by ensuring  Lemma \ref{lemma:tier_topology} holds. Essentially, in Lemma \ref{lemma:tier_topology} the existence of a directed path of reactions starting with a complex $y \in \Td1$ and ending with a complex $y'\not \in \Td1$ was crucial. Hence we can generalize Theorem \ref{thm:single_mixing} with alternative conditions that guarantee the existence of such a path of reactions. 

Using the observation above, the corollary below allows us to drop the single linkage class assumption.   However, $\pi$ could be a point mass (for example, when the network is $2S_1 \to S_1 \to \emptyset$ and no inflow of $S_1$ is added).

\begin{cor}\label{cor: generalization with path}
    Let $(\S,\C, \Re)$ be a binary reaction network with $\S=\{S_1,\dots,S_d\}$. Suppose that for each binary complex $y \in \C$, there is a directed path within the reaction graph beginning with $y$ and ending with either a unary complex (of the form $S_j$) or  the zero complex, $\emptyset$. Let $\widetilde{\C} = \C \cup\{\emptyset\}\cup\{S_i \ | \ S_i \in \S\}$ and $\widetilde \Re  = \Re \cup_{S_i\in \S} \{ S_i\to \emptyset\} \cup_{S_i\in \mathcal{I}} \{\emptyset \to S_i\},$ where $\mathcal{I}\subset \S$ is any subset of the species.  Let $\K=\{\kappa_{y\to y'}\}$ be a choice of reaction rate constants.  
 Let $X(0) = x \in   \Z^d_{\ge 0}$ be in a closed communication class, $\mathbb{S}$.   Then there is a stationary distribution $\pi$ on that class.  Moreover, there
 exist  $C,\eta>0$ so that the Markov process $X$ with intensity functions \eqref{mass} associated to the reaction network $(\S,\widetilde{\C},\widetilde \Re)$ satisfies
\begin{align*}
\Vert P^t(x,\cdot) - \pi(\cdot) \Vert_{\TV} \le C(|x|+1)\ln(|x|+2) e^{-\eta t}, \quad \text{for all } t \ge 0,
\end{align*}
where  neither $C$ nor $\eta$ depend upon $x\in \mathbb{S}$.
  Thus, the  mixing time  satisfies $\tau_x^\varepsilon = O(\ln(|x|))$, as $|x|\to \infty$.  
 
 Suppose now that there is a $w\in \R^{d}_{>0}$ for which $w\cdot (y'-y)=0$ for all $y\to y'\in \Re$.  Let $X(0) = x \in   \Z^d_{\ge 0}$ be in a closed communication class, $\mathbb{S}$.   Then there is a stationary distribution $\pi$ on that class.  Moreover,
  there exist  $C,\eta>0$ so that the Markov process $X$ with intensity functions \eqref{mass} associated to the reaction network $(\S,\widetilde{\C},\widetilde \Re)$ satisfies
\begin{align*}
\Vert P^t(x,\cdot) - \pi(\cdot) \Vert_{\TV} \le C(|x|+1) e^{-\eta t}, \quad \text{for all } t \ge 0,
\end{align*}
where neither $C$ nor $\eta$ depend upon $x \in \mathbb{S}$.
Thus, the mixing time    satisfies $\tau_x^{\varepsilon} = O(\ln(|x|))$, as $|x|\to \infty$.
\end{cor}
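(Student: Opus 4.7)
The plan is to mimic the proof of Theorem \ref{thm:single_mixing}, since the only place that proof used either weak reversibility or the single-linkage-class hypothesis was in invoking Lemma \ref{lemma:tier_topology}. Thus the main task is to establish a variant of that lemma under the new directed-path assumption; once it is in hand, the rest of the argument transfers verbatim, and Theorem \ref{thm:exp_ergo} supplies the quoted mixing bounds. Existence of a stationary distribution $\pi$ on each closed communication class $\mathbb S$ is automatic, since the inequality $\mathcal A V \le -aV + b$ outside a compact implies $\mathcal A V \le -1$ outside a (possibly larger) compact, which already yields positive recurrence on $\mathbb S$.

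For the conservative case, the computation from Theorem \ref{thm:single_mixing} with $W(x) = w\cdot x$ goes through unchanged: reactions in $\Re$ contribute zero to $\mathcal A W$, the full set of outflows $S_i \to \emptyset$ produces $-\sum_i \kappa_{S_i\to\emptyset} w_i x_i \le -aW(x)$ with $a = \min_i \kappa_{S_i \to \emptyset}$, and any inflows $\emptyset \to S_i$ with $i \in \mathcal I$ contribute a bounded constant $b$. Thus $\mathcal A W(x) \le -aW(x) + b$ on $\mathbb S$, and Theorem \ref{thm:exp_ergo} produces the $(|x|+1)e^{-\eta t}$ bound.

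For the non-conservative case, the target variant of Lemma \ref{lemma:tier_topology} reads: for every D-type tier-sequence $\{x_n\}$ for $(\S,\widetilde\C,\widetilde\Re)$, there is a reaction $y_1 \to y_1' \in \widetilde\Re$ with $y_1 \in \Td1$, $y_1 \succ_D y_1'$, and $\lim_{n\to\infty} \lambda_{y_1 \to y_1'}(x_n)/(x_n\vee 1)^{y_1} > 0$. Since $x_{n,i}\to\infty$ for some $i$, we have $\emptyset \notin \Td1$. If some unary $S_i \in \Td1$, the outflow $S_i\to\emptyset\in \widetilde\Re$ satisfies all three conditions by Lemma \ref{lem22}. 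Otherwise, $\Td1$ must contain some binary complex $y_0 \in \C$, and the directed-path hypothesis yields a path in $\Re$ from $y_0$ to a unary or zero complex; since neither of those can lie in $\Td1$, the path exits $\Td1$ at some first step, and the corresponding reaction $y_1 \to y_1'$ gives (i) and (ii) with $y_1$ necessarily binary. Lemma \ref{lem22} then yields (iii) when $y_1 = 2S_k$, and also when $y_1 = S_k+S_\ell$ with $k \ne \ell$, because $y_1 \in \Td1$ while $S_k, S_\ell \notin \Td1$ forces both $x_{n,k}, x_{n,\ell}\to \infty$. Plugging this reaction into \eqref{asymptotic of AV 2} and running the contradiction argument from the proof of Theorem \ref{thm:single_mixing} verbatim delivers $\mathcal A V(x_n) \le -C V(x_n)$ for $n$ large, closing the proof and yielding the stated $(|x|+1)\ln(|x|+2)e^{-\eta t}$ estimate.

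The main subtlety I expect to have to justify carefully is the $S_k+S_\ell$ case of condition (iii): the mass-action intensity $\kappa x_{n,k} x_{n,\ell}$ vanishes whenever either coordinate is zero, so one must argue that being in $\Td1$ without either $S_k$ or $S_\ell$ individually being in $\Td1$ forces both coordinates to diverge. This uses the identity $(x_n\vee 1)^{S_k+S_\ell} = (x_{n,k}\vee 1)(x_{n,\ell}\vee 1)$ together with the observation that $S_k \sim_D S_k+S_\ell$ as soon as one of the factors $(x_{n,k}\vee 1)$ or $(x_{n,\ell}\vee 1)$ stays bounded, contradicting the working assumption that no unary complex lies in $\Td1$. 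Beyond this point the argument is essentially bookkeeping: restricting all statements to the closed class $\mathbb S$, passing to a maximal-coordinate subsequence, and translating the $V$-based drift estimate into the stated mixing bound via Theorem \ref{thm:exp_ergo}.
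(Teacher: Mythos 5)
Your proposal is correct and follows essentially the same route as the paper: the paper's own proof simply observes that weak reversibility and the single-linkage-class hypothesis entered Lemma \ref{lemma:tier_topology} only to produce a directed path out of $T^{D,1}_{\{x_n\}}$ from a binary complex, which the corollary's hypothesis now supplies directly, and then reuses the proof of Theorem \ref{thm:single_mixing}. You merely spell out the resulting case analysis (including the $S_k+S_\ell$ subtlety, which is handled exactly as in the original lemma) in more detail than the paper does.
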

\begin{proof}
    It suffices to show that the results (i), (ii), and (iii) in Lemma \ref{lemma:tier_topology} hold. However, in the proof of Lemma \ref{lemma:tier_topology}, the weak reversibility and single linkage class assumptions of $(\S,\C,\Re)$ were only used to show  the existence of a directed path of reactions beginning with some complex $y\in \Td1$ and ending with either a unary complex or the zero complex when $\Td1$ only contains  binary complexes. Hence by directly assuming the existence of such a sequence, the result follows.
\end{proof}

\begin{example}
    Let a reaction network $(\S,\C,\Re)$ be described with the following reaction graph.
    \begin{align*}
        2B \to A+B \to C, \quad 2C\to A \leftarrow \emptyset \to B.
    \end{align*}
    Then each binary complex in $\C$ has a directed path of reactions beginning with itself and ending with a unary complex. Then adding all the out-flows, we have the new reaction network $(\S,\widetilde \C, \widetilde \Re)$, which has reaction graph
    \begin{align*}
            2B \to A+B \to \ \ &C\\
            & \hspace{-0.11cm} \downarrow\\
            \quad 2C\to A \rightleftharpoons \ &\emptyset \rightleftharpoons B.
    \end{align*}
    By Collorary \ref{cor: generalization with path}, for any choice of rate constants the Markov model associated to the network $(\S,\widetilde \C, \widetilde \Re)$ is exponentially ergodic with mixing time bounded above by the logarithm of the initial counts.  \hfill $\triangle$
\end{example}

We may also generalize by not requiring that all the species have an out-flow reaction ($S_i \to \emptyset$). However, in this case we add the assumption that there is a ``chain'' of first order reactions (of the form $S_i \to S_j$) connecting a species with no out-flow reaction to one with an out-flow reaction. We first consider the case where only one species, $S_1$, say, has an outflow.

\begin{cor}\label{cor: generalization with unary path}
    Let $(\S,\C, \Re)$ be a binary reaction network with $\S=\{S_1,\dots,S_d\}$ and $\{S_1,S_2,\dots,S_d\} \subset \C$. Suppose that
    \begin{enumerate}
        \item for each binary complex $y \in \C$, there is a directed path within the reaction graph beginning with $y$ and ending with either a unary complex (of the form $S_j$) or  the zero complex, $\emptyset$.
        \item for each unary complex  $S_i \in \C$ such that $i\neq 1$, there is a directed path within the reaction graph such that it begins with $S_i$, ends with $S_1$, and consists only of unary complexes (i.e. the path is of the form $S_i\to S_{i_1}\to S_{i_2}\to \dots \to S_1$).
    \end{enumerate}
    Let $\widetilde{\C} = \C \cup \{\emptyset\}$ and $\widetilde \Re  = \Re \cup \{S_1\to \emptyset\} \cup_{S_i\in \mathcal{I}} \{\emptyset \to S_i\},$ where $\mathcal{I}\subset \S$ is any subset of the species.  Let $\K=\{\kappa_{y\to y'}\}$ be a choice of reaction rate constants.  
 Let $X(0) = x \in   \Z^d_{\ge 0}$ be in a closed communication class, $\mathbb{S}$. Then there is a stationary distribution $\pi$ on that class.  Moreover, 
 there exist  $C,\eta>0$ so that the Markov process $X$ with intensity functions \eqref{mass} associated to the reaction network $(\S,\widetilde{\C},\widetilde \Re)$ satisfies
\begin{align*}
\Vert P^t(x,\cdot) - \pi(\cdot) \Vert_{\TV} \le C(|x|+1)\ln(|x|+2) e^{-\eta t}, \quad \text{for all } t \ge 0,
\end{align*}
where  neither $C$ nor $\eta$ depend upon $x\in \mathbb{S}$.
  Therefore the  mixing time of the model satisfies $\tau_x^\varepsilon = O(\ln(|x|))$, as $|x|\to \infty$.  
 
 Suppose now that there is a $w\in \R^{d}_{>0}$ for which $w\cdot (y'-y)=0$ for all $y\to y'\in \Re$.  Let $X(0) = x \in   \Z^d_{\ge 0}$ be in a closed communication class, $\mathbb{S}$.  Then there is a stationary distribution $\pi$ on that class.  Moreover,
 there exist  $C,\eta>0$ so that the Markov process $X$ with intensity functions \eqref{mass} associated to the reaction network $(\S,\widetilde{\C},\widetilde \Re)$ satisfies
\begin{align*}
\Vert P^t(x,\cdot) - \pi(\cdot) \Vert_{\TV} \le C(|x|+1) e^{-\eta t}, \quad \text{for all } t \ge 0,
\end{align*}
where neither $C$ nor $\eta$ depend upon $x \in \mathbb{S}$.
The mixing time in this case still satisfies $\tau_x^{\varepsilon} = O(\ln(|x|))$, as $|x|\to \infty$.
\end{cor}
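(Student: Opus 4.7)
The plan is to reduce to Theorem \ref{thm:single_mixing} by showing that the conclusion of Lemma \ref{lemma:tier_topology} continues to hold for the augmented network $(\S,\widetilde \C, \widetilde \Re)$ under the present hypotheses; once that is in hand, the contradiction argument in the proof of Theorem \ref{thm:single_mixing} carries over verbatim to yield $\A V(x)\le -aV(x)+b$ for $V$ defined in \eqref{eq:StandardLyapunov} outside a compact, and Theorem \ref{thm:exp_ergo} delivers the stated $B(x)=C(|x|+1)\ln(|x|+2)$ bound. The only new work is therefore the tier analysis: given any D-type tier-sequence $\{x_n\}$, I must exhibit a reaction $y_1\to y_1'\in \widetilde \Re$ satisfying (i) $y_1\in \Td1$, (ii) $y_1\succ_D y_1'$, and (iii) $\lim_n \lambda_{y_1\to y_1'}(x_n)/(x_n\vee 1)^{y_1}>0$.

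I would split into three cases according to what $\Td1$ contains. First, if $S_1\in \Td1$, take $y_1\to y_1' = S_1\to \emptyset$; since membership in $\Td1$ forces $x_{n,1}\to\infty$, Lemma \ref{lem22} supplies (iii). Second, if some $S_i\in \Td1$ with $i\ne 1$ while $S_1\notin \Td1$, invoke hypothesis 2 to get a unary path $S_i\to S_{i_1}\to \cdots\to S_1$; this path must leave $\Td1$ at some step, yielding a unary-to-unary reaction $S_j\to S_k$ with $S_j\in \Td1$ and $S_k\notin \Td1$, and Lemma \ref{lem22} again supplies (iii). Third, if no unary complex is in $\Td1$ (and $\emptyset$ never is, because the tier-sequence has $x_{n,i}\to\infty$ for some $i$ with $S_i\in \C$), then $\Td1$ consists only of binary complexes; apply hypothesis 1 to any binary $y\in \Td1$ to obtain a directed path to a unary or zero complex, and take the reaction at which this path first leaves $\Td1$.

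The delicate step is verifying (iii) in the third case, where the exiting reaction has a binary source. For $y=2S_k$ it is automatic because $x_{n,k}\to\infty$. For the mixed source $y=S_k+S_\ell$ with $k\ne \ell$, the danger is $\lim_n x_{n,\ell}=0$, which would make the mass-action intensity vanish; here the hypothesis $\{S_1,\dots,S_d\}\subset \C$ rescues the argument: if $x_{n,\ell}=0$ for all large $n$, then $(x_n\vee 1)^{S_k}/(x_n\vee 1)^y\to 1$, forcing $S_k\in \Td1$ and contradicting the case hypothesis. Hence both coordinates are eventually positive, and (iii) follows from Lemma \ref{lem22} with constant $\kappa_{y\to y'}$.

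For the conservative refinement (the improved $C(|x|+1)$ bound), the proof of Theorem \ref{thm:single_mixing} uses $W(x)=w\cdot x$ and leans on every outflow $S_i\to\emptyset$ contributing a negative term $-\kappa_{S_i\to\emptyset} w_i x_i$. In the present setting only $S_1\to\emptyset$ is present, so I would replace $W$ by a weighted linear function $V(x)=\sum_i a_i x_i$ with $a_1>0$ and weights $a_i>a_{i_1}$ strictly decreasing along the unary chain out of each $S_i$; the chain reactions then supply negative drift on every species, and the binary structure of $\Re$ together with conservativity of $w$ should permit a choice of weights absorbing the remaining $\Re$-reaction contributions and producing $\A V\le -aV+b$. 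Controlling these remaining terms is the main obstacle and will require careful tuning of the $a_i$ against the rate constants of the binary reactions in $\Re$; Theorem \ref{thm:exp_ergo} then yields the improved linear bound.
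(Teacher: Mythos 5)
Your extension of Lemma \ref{lemma:tier_topology} to $(\S,\widetilde{\C},\widetilde{\Re})$ is correct and matches the paper's (your treatment of the mixed binary source $S_k+S_\ell$ via the hypothesis $\{S_1,\dots,S_d\}\subset\C$ is actually spelled out more carefully than in the paper, which simply defers to the proof of Lemma \ref{lemma:tier_topology}). The genuine gap is the claim that, once this is in hand, the contradiction argument of Theorem \ref{thm:single_mixing} ``carries over verbatim.'' It does not. In that proof the reaction supplied by Lemma \ref{lemma:tier_topology} only licenses the bound \eqref{asymptotic of AV 2}; the decisive final step is separate: one takes the \emph{maximal coordinate} $x_{n,1}$ and uses the outflow $S_1\to\emptyset$ --- available there for \emph{every} species --- to retain the term $-C\lambda_{S_1\to\emptyset}(x_n)\ln(x_{n,1})\asymp -C\,x_{n,1}\ln(x_{n,1})\le -C'V(x_n)$. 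In the corollary only the single species $S_1$ has an outflow, so when the maximal coordinate is $x_{n,\ell}$ with $\ell\ne 1$ this step is unavailable, and the reaction you construct need not do the job: in your third case, for instance, its source is a binary complex that need not involve the maximal coordinate, and the single retained term $\lambda_{y_1\to y_1'}(x_n)\ln\bigl((x_n\vee1)^{y_1'}/(x_n\vee1)^{y_1}\bigr)$ need not be of order $x_{n,\ell}\ln(x_{n,\ell})\asymp V(x_n)$, since $y_1\succ_D y_1'$ only forces the ratio inside the logarithm to vanish, not to vanish polynomially fast. The paper closes this with an argument you are missing: starting from the maximal species $S_\ell$, follow the unary chain $S_\ell\to S_{i_1}\to\cdots\to S_1\to\emptyset$ guaranteed by hypothesis 2 and locate along it either a reaction $S_r\to S_m\in\Re$ with $x_{n,r}$ maximal and $S_r\succ_D S_m$, or the terminal reaction $S_1\to\emptyset$ with $x_{n,1}$ maximal; it is this reaction, not the one from the tier lemma, that is kept in the sum \eqref{asymptotic of AV 3}.

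Separately, for the conservative refinement you correctly observe that $W(x)=w\cdot x$ no longer yields $\A W(x)\le -aW(x)+b$ when only $S_1\to\emptyset$ is present, but your proposed fix (a re-weighted linear function with weights decreasing along the unary chains) is left as a sketch with an acknowledged unresolved obstacle: the reactions of $\Re$ are annihilated by $w$ but not by an arbitrary re-weighting $\sum_i a_ix_i$, and you give no argument that a suitable choice of weights exists. So the second half of the statement is not established in your write-up. (The paper is itself terse here, asserting that the earlier parts of the proof of Theorem \ref{thm:single_mixing} stand without modification, but that does not repair your argument.)
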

\begin{proof}
    We first show that the results in Lemma \ref{lemma:tier_topology} hold for $(\S,\widetilde \C, \widetilde \Re)$. Let $\{x_n\}$ be an arbitrary tier-sequence. We only need to consider the case that at least one unary complex is in $\Td1$. The proof of the other cases is the same as the proof of Lemma \ref{lemma:tier_topology}. Let $y=S_{\ell} \in \Td1$. Then on a direct path $S_{\ell}\to S_{i_1}\to \cdots \to S_1\to \emptyset$, there must exist a reaction of the form either $S_j\to S_k$ for some $j\neq k$ such that $S_j \in \Td1$ and $S_k \not \in \Td1$ or $S_1\to \emptyset$ with $S_1 \in \Td1$ and $\emptyset \not \in \Td1$ (as always). Then letting this reaction be $y_1\to y_1'$, Lemma \ref{lemma:tier_topology} follows.

    To complete the proof, it suffices to start at \eqref{asymptotic of AV 3} because the previous parts in the proof of Theorem \ref{thm:single_mixing} stand without further modification. Let $\{x_n\}$ be an arbitrary tier-sequence. Note that i) if there exists a reaction $y\to y'$ such that $y=S_r$ for some $r$ and $y \succ_D y'$ and ii) if $x_{n,r}$ is a maximal coordinate of $\{x_n\}$, then the proof is done in the same way as the proof of Theorem \ref{thm:single_mixing}.  Let $x_{n,\ell}$ be the maximal coordinate of $\{x_n\}$.   
    Then over a directed path $S_\ell=S_{i_0} \to S_{i_1}\to S_{i_2} \to \cdots \to S_1 \to \emptyset$, we have that either i) there exists $S_r \to S_m \in \Re$ for some $r, m$ such that $x_{n,r}$ is a maximal coordinate and $S_r \succ_D S_m$ or ii) $S_1 \to \emptyset$ satisfies that $x_{n,1}$ is a maximal coordinate and $S_1\succ_D \emptyset$. This completes the proof. 
\end{proof}

\begin{example}
Let $(\S,\C,\Re)$ be a reaction network that is described with the following reaction graph
\begin{align*}
            & \hspace{-3.2cm} \quad 2B \xrightarrow{\ \ \ \ \ } A+B \xrightarrow{ \ \ \ \ \ } \ \  C\\[-1ex]
            & \ \nearrow \ \ \ \  \ \ \ \searrow\\[-1ex]
             \quad 2C\xrightarrow{ \ \ \ \ \ } \ &A  \xleftarrow{ \ \ \ \ \ \ \ \ \ \ }  B\\[-1ex]
            \displaystyle &  \ \ \nwarrow \quad \qquad  \searrow \hspace{-0.2cm} \nwarrow\\[-1ex]
             &\qquad \  \emptyset \xrightarrow{\ \ \ \ \ \ \  \ \ } D.
    \end{align*}
    
 Then $(\S, \C, \Re)$ satisfies conditions (i) and (ii) of Corollary \ref{cor: generalization with unary path} where $S_1$ can be taken to be $A$, $B$, or $C$.  Hence, $(\S,\widetilde \C,\widetilde \Re)$ can be made with an additional one of the out-flows $\{A\to \emptyset, B\to \emptyset, C\to \emptyset, D\to \emptyset\}$. 
 Then, by Corollary \ref{cor: generalization with unary path}, for any choice of rate constants the Markov model associated to the network $(\S,\widetilde \C, \widetilde \Re)$ is exponentially ergodic with mixing time bounded above by the logarithm of the initial counts. \hfill $\triangle$ 
\end{example}

 Finally,  Corollary \ref{cor: generalization with unary path} can be generalized further by adding   additional out-flows   to $(\S,\C,\Re)$ when some unary complex does not satisfy the directed path condition.  In the corollary below, $\S_p$ plays the role of $\{S_2,\dots,S_d\}$ and $\S_e$ plays the role of $\{S_1\}$ in Corollary \ref{cor: generalization with unary path}.

\begin{cor}\label{cor: generalization with unary path2}
    Let $(\S,\C, \Re)$ be a binary reaction network with $\S=\{S_1,\dots,S_d\}$. 
     We assume the following conditions. 
     \begin{enumerate}
         \item     For each binary complex $y \in \C$, there is a directed path within the reaction graph beginning with $y$ and ending with either a unary complex (of the form $S_j$) or  the zero complex, $\emptyset$.         
        \item There exist $\S_p \subseteq \S$ and a non-empty subset $\S_e \subseteq \S$ such that (i) $\S = \S_p \cup \S_e$, and (ii) for each $S_i\in \S_p$ there is a directed path within the reaction graph such that it begins with $S_i$ and ends with some $S_k \in \S_e$. Furthermore the directed path consists only of unary complexes (i.e. the path is of the form $S_i\to S_{i_1}\to S_{i_2}\to \dots \to S_k$).  
        
     \end{enumerate}
      Let $\widetilde{\C} = \C  \cup \{S_i \ | \ S_i \in \S_e \} \cup\{\emptyset\}$ and $\widetilde \Re  = \Re \cup_{S \in \S_e} \{S\to \emptyset\}\cup_{S_i\in \mathcal{I}} \{\emptyset \to S_i\},$ where $\mathcal{I}\subset \S$ is any subset of the species.  Let $\K=\{\kappa_{y\to y'}\}$ be a choice of reaction rate constants.  Let $X(0) = x \in   \Z^d_{\ge 0}$ be in a closed communication class. Then there is a stationary distribution $\pi$ on that class.  Moreover,
      there exist  $C,\eta>0$ so that the Markov process $X$ with intensity functions \eqref{mass} associated to the reaction network $(\S,\widetilde{\C},\widetilde \Re)$ satisfies
\begin{align*}
\Vert P^t(x,\cdot) - \pi(\cdot) \Vert_{\TV} \le C(|x|+1)\ln(|x|+2) e^{-\eta t}, \quad \text{for all } t \ge 0,
\end{align*}
where  neither $C$ nor $\eta$ depend upon $x\in \mathbb{S}$.
  Therefore the  mixing time of the model satisfies $\tau_x^\varepsilon = O(\ln(|x|))$, as $|x|\to \infty$.  
 
 Suppose now that there is a $w\in \R^{d}_{>0}$ for which $w\cdot (y'-y)=0$ for all $y\to y'\in \Re$.  Let $X(0) = x \in   \Z^d_{\ge 0}$ be in a closed communication class, $\mathbb{S}$. Then there is a stationary distribution $\pi$ on that class.  Moreover,
 there exist  $C,\eta>0$ so that the Markov process $X$ with intensity functions \eqref{mass} associated to the reaction network $(\S,\widetilde{\C},\widetilde \Re)$ satisfies
\begin{align*}
\Vert P^t(x,\cdot) - \pi(\cdot) \Vert_{\TV} \le C(|x|+1) e^{-\eta t}, \quad \text{for all } t \ge 0,
\end{align*}
where neither $C$ nor $\eta$ depend upon $x \in \mathbb{S}$.
The mixing time in this case still satisfies $\tau_x^{\varepsilon} = O(\ln(|x|))$, as $|x|\to \infty$.
\end{cor}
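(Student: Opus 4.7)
The plan is to follow the same path as the proof of Corollary \ref{cor: generalization with unary path}, extending the argument where the multi-outflow structure of $\S_e$ requires it. Since the Foster--Lyapunov machinery of Theorem \ref{thm:exp_ergo}, the tier bound \eqref{asymptotic of AV 3}, and the reduction to a contradiction via a D-type tier-sequence all transfer verbatim from Theorem \ref{thm:single_mixing}, the proof reduces to two ingredients: (i) the conclusion of Lemma \ref{lemma:tier_topology} must hold for $(\S,\widetilde\C,\widetilde\Re)$, namely for every D-type tier-sequence $\{x_n\}$ there exists a reaction $y_1\to y_1' \in \widetilde\Re$ with $y_1 \in \Td1$, $y_1 \succ_D y_1'$, and $\lim_n \lambda_{y_1\to y_1'}(x_n)/(x_n\vee 1)^{y_1} > 0$; and (ii) at the maximal-coordinate step of Theorem \ref{thm:single_mixing}, a reaction with the above properties can be chosen whose source uses a maximal coordinate of $\{x_n\}$.

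For ingredient (i), I would fix a D-type tier-sequence $\{x_n\}$ and split on the composition of $\Td1$. If $\Td1$ contains a binary complex, hypothesis (1) produces a directed path from that complex to a unary complex or $\emptyset$, and the first reaction leaving $\Td1$ along this path gives what is needed, exactly as in Lemma \ref{lemma:tier_topology}, noting that $\emptyset \notin \Td1$. Otherwise $\Td1$ consists solely of unary complexes. Pick some $S_\ell \in \Td1$. If $S_\ell \in \S_e$, the outflow $S_\ell \to \emptyset \in \widetilde\Re$ works immediately. If $S_\ell \in \S_p \setminus \S_e$, hypothesis (2) yields a unary path $S_\ell \to S_{i_1} \to \cdots \to S_k$ with $S_k \in \S_e$; traversing this path either produces a reaction $S_r \to S_m$ with $S_r \in \Td1$ and $S_m \notin \Td1$, or every complex on the path lies in $\Td1$, in which case $S_k \in \Td1 \cap \S_e$ and the outflow $S_k \to \emptyset$ is the required reaction. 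In every case the source is unary, so Lemma \ref{lem22} delivers the needed positive asymptotic intensity.

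For ingredient (ii), I would let $x_{n,\ell}$ be a maximal coordinate of $\{x_n\}$, so in particular $S_\ell \in \Td1$. If $S_\ell \in \S_e$, the outflow $S_\ell \to \emptyset$ supplies a reaction whose source has maximal coordinate and strictly drops tier, and the computation at the end of the proof of Theorem \ref{thm:single_mixing} then yields $\A V(x_n) \le -C V(x_n)$. If $S_\ell \notin \S_e$, I would traverse the unary path $S_\ell \to S_{i_1} \to \cdots \to S_k$ of hypothesis (2): either some intermediate reaction $S_r \to S_m$ satisfies both $x_{n,r}$ maximal and $S_r \succ_D S_m$, or all coordinates along the path remain maximal, whence $x_{n,k}$ is maximal and the outflow $S_k \to \emptyset \in \widetilde\Re$ serves the same role. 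The remainder of the proof of Theorem \ref{thm:single_mixing}, bounding $V(x_n)$ by a constant multiple of $x_{n,r} \ln(x_{n,r}\vee 1)$, then produces the desired contradiction with \eqref{eq:negation for single expo}.

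The conservative case proceeds along the same lines as in Theorem \ref{thm:single_mixing} and Corollary \ref{cor: generalization with unary path}, using the linear Lyapunov $W(x) = w \cdot x$ together with the outflows attached to species of $\S_e$ and the $w$-equalities propagated along the unary paths of hypothesis (2). The main obstacle will be the coupled bookkeeping in ingredient (ii): as one walks the unary path into $\S_e$, one must ensure that at each step either the tier strictly decreases, producing the desired reaction on the spot, or both tier membership and maximal-coordinate status are preserved, so that reaching a species in $\S_e$ guarantees an outflow reaction with maximal source coordinate. The dichotomy above resolves this cleanly, but writing it out requires careful attention to the placement of the first tier-dropping edge along the path.
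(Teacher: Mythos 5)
Your proposal is correct and follows essentially the same route as the paper, which simply reduces the statement to Corollary \ref{cor: generalization with unary path} with $S_1$ replaced by some $S_k\in\S_e$ (and to Corollary \ref{cor: generalization with path} when $\S_p=\emptyset$); your two ingredients are exactly the verification of Lemma \ref{lemma:tier_topology} for $(\S,\widetilde\C,\widetilde\Re)$ and the path-traversal dichotomy at the maximal-coordinate step used there. One harmless slip: a maximal coordinate $x_{n,\ell}$ does not by itself put $S_\ell$ in $\Td1$ (a binary complex may dominate it); what your argument actually needs, and what the paper uses, is only $S_\ell\succ_D\emptyset$ and the tier comparisons along the unary path, so nothing breaks.
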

\begin{proof}
   If $\S_e\neq \empty$ and $\S_p\neq \emptyset$, the proof is exactly the same as the proof of Corollary \ref{cor: generalization with unary path} after replacement of $S_1$ by $S_{k} \in \S_e$.
    If $\S_p=\emptyset$, then $\S_p^c=\S$. In this case the result still follows since  $(\S,\widetilde \C,\widetilde \Re)$ has the same condition of Corollary \ref{cor: generalization with path}.
\end{proof}

\begin{example}\label{example:kdjfakl;j}
    We give an example for Corollary \ref{cor: generalization with unary path2} with a version of enzyme-substrate kinetics.
    Consider the reaction network described by the reaction graph below:
    \begin{align}\label{eq:es kinetics}
        &S \rightleftharpoons P \leftarrow \emptyset \to  E\\ \notag
        &\hspace{1.7cm}\downarrow \hspace{-0.05cm }\uparrow\\
        &S+E \rightleftharpoons  \ SE \rightleftharpoons E+P. \notag
    \end{align}
    We put $\S_p=\{S\}$ and $\S_e=\{P, E, SE\}$. 
    Then $(\S,\widetilde\C, \widetilde \Re)$ is made by adding reactions $E\to \emptyset$ and $P\to \emptyset$ to this reaction network (note that $SE\to \emptyset$ is already in the network and so does not need to be added).
    Similar to Figure \ref{fig1}, in Figure \ref{fig2} we provide the results of a numerical simulation for the continuous-time Markov chain $X$ associated with $(\S,\widetilde\C, \widetilde \Re)$ in which all rate constants are selected to be equal to one. 
    In this case, the stationary distribution of $X$ is a product form of Poissons \cite{AndProdForm}. \hfill $\triangle$
\end{example}

\begin{figure}
    \centering
    \includegraphics[width=0.9\textwidth]{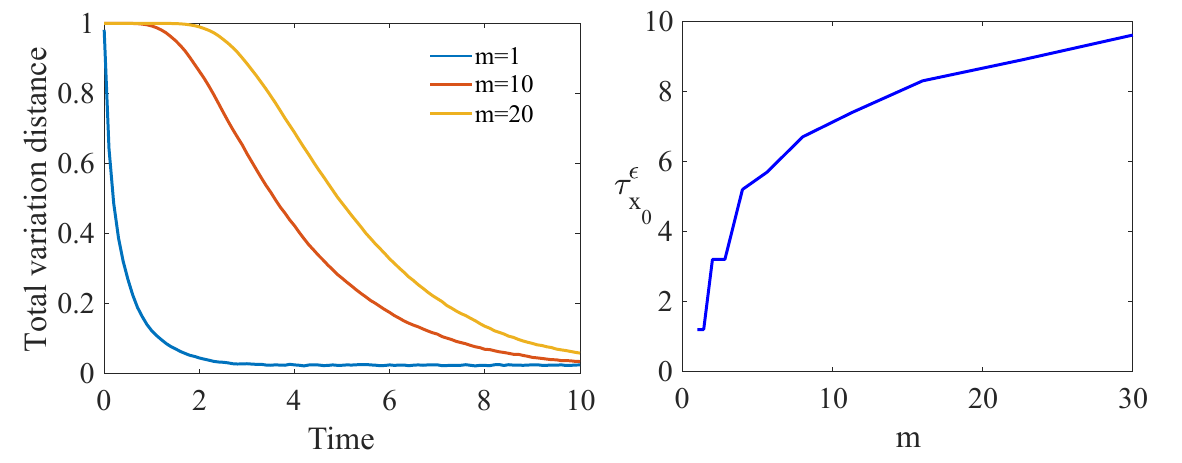}
    \caption{Plots for the model provided in Example \ref{example:kdjfakl;j} in \eqref{eq:es kinetics}. 
 For each initial condition $X(0)=(m,m,m)^\top$, the image on the left provides  a plot of  $\Vert P^t(x,\cdot)-\pi(\cdot)\Vert_{\TV}$ over time for the reaction network \eqref{eq:es kinetics}, where $\pi$ represents the stationary distribution. The image on the right provides a plot  of the mixing times $\tau^\varepsilon_{x_m}$ with $\varepsilon=0.1$ and $x_m=(m,m,m)^\top$, as a function of $m$.  Note that the growth on the right appears to be logarithmic, which agrees with our theory.
    }
    \label{fig2}
\end{figure}

\section*{Acknowledgments}

DFA is supported from NSF-DMS-2051498 and Army Research Office grant W911NF-18-1-0324. JK is supported from the National Research Foundation of Korea (NRF) grant funded by the Korea government (MSIT)(No. 2022R1C1C1008491) and the Basic Science Research Institute Fund, whose NRF grant number is 2021R1A6A1A10042944.  We also thank the three reviewers of our paper, whose comments improved the final product appreciably.

\section*{Conflict of interest}

The authors declare there is no conflict of interest.

\bibliographystyle{plain}
\bibliography{bibliographyMixingTimeOrig}

\end{document}